\def\input@path{{../figures/}{../bibliography/}}
\newcommand{\eps}{{\varepsilon}}
\newcommand{\Real}{{\mathbb{R}}}
\newcommand{\Complex}{{\mathbb{C}}}
\newcommand{\Integer}{{\mathbb{Z}}}
\newcommand{\Cube}{{\mathcal{Q}}}
\newcommand{\Schwartz}{{\mathcal{S}}}
\newcommand{\Prob}{{\mathbb{P}}}
\DeclareMathOperator{\dist}{dist}
\DeclareMathOperator{\supp}{supp}
\DeclareMathOperator{\Capacity}{cap}
\DeclareMathOperator{\val}{val}
\DeclareMathOperator{\Imgnry}{Im}
\newtheorem{thm}{Theorem}
\newtheorem{cor}{Corollary}
\newtheorem{lemma}{Lemma}
\newtheorem{propn}{Proposition}
\newtheorem{question}{Question}
\begin{document}

\title[A decomposition for the Schr{\"o}dinger equation]{A decomposition for the Schr{\"o}dinger equation with applications to bilinear and multilinear
estimates}
\date{\today}
\author[F. Hern{\'a}ndez]{Felipe Hern{\'a}ndez}
\email{felipeh@mit.edu}
\keywords{Schr{\"o}dinger equation, bilinear Strichartz, multilinear restriction}
\subjclass[2010]{Primary 35Q41; Secondary 42B37}

\begin{abstract}
  A new decomposition for frequency-localized solutions to the Schrodinger
  equation is given which describes the evolution of the wavefunction using 
  a weighted sum of Lipschitz tubes. 
  As an application of this decomposition, we provide a new proof of the 
  bilinear Strichartz estimate as well as the 
  multilinear restriction theorem for the paraboloid.
\end{abstract}

\maketitle

This paper introduces new way of decomposing solutions to the 
Schr{\"o}dinger equation, which approximates the evolution of 
the mass distribution of a solution as a sum of spacetime Lipschitz tubes.  
There are several ways in which this decomposition differs from existing 
decompositions of solutions to the Schr{\"o}dinger equation, such as the 
wavepacket decomposition. First, whereas the wavepacket decomposition breaks 
the solution up into tubes of width $R^{1/2}$ and length $R$, the tubes 
appearing in this decomposition have unit width and can be made arbitrarily 
long.  Second, the tubes involved are curved instead of straight.  Finally, the 
decomposition involves no cancellation.  Whereas in the usage of the wavepacket 
decomposition it is often necessary to induct on scales in order to resolve 
possible cancellations between tubes, all pieces of the decomposition 
introduced here are positive so there is no opportunity for cancellation.  The 
lack of cancellation to exploit and the relatively loose control of the shape 
of the tubes means that this decomposition seems unable to prove many dispersive
inequalities used in the literature on the Schrodinger equation.  However,
the decomposition is useful enough to prove the bilinear Strichartz estimate
and a special case of the multilinear restriction estimate.  One motivation
for this work was to find proofs of these estimates that can be generalized
to non-Euclidean settings such as hyperbolic space. 

Now let us set up the general situation for our results.
Let $u:\Real^d\times\Real\to\Complex$ be a solution to the Schr{\"o}dinger 
equation
\begin{equation}
  i\partial_t u + \Delta u= 0
  \label{schrod-eq}
\end{equation}
with initial condition $u(x,0) = u_0(x)\in L^2(\Real^d)$.  For convenience
we will write $u_t$ for the function $u_t(x) = u(t,x)$.  If $u_t$ is localized
in frequency space, so that the Fourier transform $\supp \hat{u_t} \subset B_1$ 
is contained in the unit ball, then physical intuition about the 
Schr{\"o}dinger equation suggests that $u_t$ has two important properties.
First, because of the uncertainty principle, one expects that $u_t$ (and thus
$|u_t|^2$) does not vary much (is ``locally constant'') on unit scales.  
Moreover, because $u_t$ has bounded momentum, one expects that $u_t$ enjoys
something akin to finite speed of propagation.  Together these suggest that 
it is possible to describe the evolution of the mass $|u_t|^2$ in terms of 
discrete ``packets'' that travel along finite-speed paths.  The point of
this paper is to make this intuition precise.  

To set up some more notation, suppose
$\gamma:\Real\to\Real^d$ is some differentiable path which will represent the
motion of one such packet.  We say that $\gamma$ has speed at most
$V$ if $|\nabla \gamma| \leq V$.  Moreover we define the tube of width $r$
centered at $\gamma$ by 
\[
  T_{\gamma,r} := \{(x,t)\in\Real^d\times\Real; |x-\gamma(t)|\leq r\}.
\]
We will use $T_{\gamma,r}$ and its indicator function interchangeably.  Then
our main result is the following decomposition.
\begin{thm}[Skinny Lipschitz Tube decomposition]
  Let $u_t$ solve the Schr{\"o}dinger equation~\eqref{schrod-eq}
  with frequency localization $\supp \hat{u_0} \subset B_1$.  There
  exists a radius $r$, a speed limit $V$, and a constant $C$ 
  with the following property: For any $R>0$,
  there exists a countable collection of paths $\{\gamma_i\}$ with
  speed at most $V$ and weights $\{w_i\}$ such that 
  \begin{equation}
    |u_t(x)|^2 \leq \sum_{i} w_i T_{\gamma,r}(x,t)
    \label{ptwise-bd}
  \end{equation}
  for any $(x,t)\in\Real^d\times[-R,R]$, and moreover this bound is 
  efficient in the sense that 
  \begin{equation}
    \sum_i r^d w_i \leq C \int_{\Real^d} |u_t|^2\,dx.
    \label{L1-efficient-bd}
  \end{equation}
  \label{tube-decomp-thm}
\end{thm}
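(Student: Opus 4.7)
The plan is to reduce the theorem to a max-flow problem on a discrete spacetime network whose min-cut lower bound will come from the continuity equation satisfied by a smoothed mass.

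I would first reduce to constructing a tube cover for a smoothed mass $\tilde\mu_t := C_0\,(W*|u_t|^2)$, where $W \ge 0$ is a unit-$L^1$ Schwartz weight essentially supported on scale $r_0$. Starting from $u_t = u_t * \phi$ with $\hat\phi \equiv 1$ on $B_1$, a Cauchy--Schwarz argument yields $|u_t(x)|^2 \le \tilde\mu_t(x)$ pointwise, for a suitable choice of $W$ and $C_0$.

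Next, I would discretise spacetime by time-step $\delta$ and cubes $\{Q_j\}$ of side $\sim r_0$, define cube masses $m_{j,k} := \int_{Q_j}\tilde\mu_{t_k}$, and construct a directed network on vertices $(Q_j,t_k)$ with vertex capacities $m_{j,k}$, interior edges of large capacity linking $(Q_j,t_k)$ to $(Q_{j'},t_{k+1})$ whenever $|c_{Q_j}-c_{Q_{j'}}|\le V\delta$, and source/sink attached to the initial and final time slices. The key estimate is $\val(\MinCut)\ge C_0\|u_0\|_{L^2}^2$: from the continuity equation
\[
  \partial_t|u_t|^2 + \Div\bigl(\Imgnry(\bar u_t \nabla u_t)\bigr) = 0,
\]
convolution with $W$ gives $\partial_t \tilde\mu_t + \Div(W*j_t)=0$ together with the pointwise current bound $|W*j_t|\le V\tilde\mu_t$, a consequence of Cauchy--Schwarz combined with Bernstein's inequality at scale $r_0$ (bounding $|\nabla u_t|$ by $|u_t|$ in an averaged sense). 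Consequently, for every set $S$ of cubes, the mass in the $V\delta$-neighbourhood of $S$ at time $t_{k+1}$ dominates the mass of $S$ at time $t_k$, and a telescoping argument along any admissible cut gives the claimed min-cut lower bound.

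Finally, max-flow/min-cut combined with the acyclic path-flow decomposition expresses $\val(\MaxFlow)=C_0\|u_0\|_{L^2}^2$ as a weighted sum of source-sink path flows, each of which visits a sequence of cubes whose centres shift by at most $V\delta$ per time step; linear interpolation yields a path $\gamma_i$ of speed at most $V$, and the tube $T_{\gamma_i,r}$ for $r$ a fixed large multiple of $r_0$ contains every cube the path visits. Setting $w_i := f_i/r^d$ with $f_i$ the flow of path $i$, mass conservation $\sum_j m_{j,k}=C_0\|u_0\|_{L^2}^2$ forces every vertex to be saturated, so the total flow through $(Q_j,t_k)$ equals $m_{j,k}$; combined with the locally-constant reduction above, this yields~\eqref{ptwise-bd}, while $\sum_i r^d w_i = \sum_i f_i = C_0 \|u_0\|_{L^2}^2$ gives~\eqref{L1-efficient-bd}. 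The main obstacle is the current estimate $|W*j_t|\le V\tilde\mu_t$: the weight $W$ must be engineered so that Cauchy--Schwarz and Bernstein combine without loss, and the parameters $r_0$, $\delta$, $V$ must be chosen compatibly so that the discrete transport inequality between consecutive time slices holds cleanly after accounting for the cube geometry.
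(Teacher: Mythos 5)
Your proposal is correct in outline and follows the same architecture as the paper's proof: a locally constant reduction by Cauchy--Schwarz (Proposition~\ref{LC-propn}), a lossless finite-speed transport inequality for a smoothed mass (Proposition~\ref{FS-propn}), a spacetime discretization, and Max-Flow Min-Cut to produce path weights (Proposition~\ref{discrete-decomp-propn}). The genuine differences are in execution. You run MFMC once on a single $N$-layer network, lower-bounding the min cut by telescoping the one-step transport inequality over reachable sets and then invoking a path-flow decomposition; the paper instead applies MFMC slice-by-slice (Lemmas~\ref{MFMC-lemma} and~\ref{one-layer-lemma}) and stitches the one-step flows into path weights by a Markov-chain construction. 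Your telescoping cut bound is valid given the one-step inequality, and your saturation argument (each time slice has total capacity equal to the flow value, forcing the flow through $(Q_j,t_k)$ to equal $m_{j,k}$) is the same device the paper uses. You also derive the transport inequality from a pointwise current bound $|W\ast j_t|\le V\tilde\mu_t$ plus an expanding-region monotonicity computation, whereas the paper integrates the flux in time against $\partial_j\mu_A$ and takes the time step $\tau$ small to absorb the constant; your route is arguably cleaner, and what it buys is avoiding the kernel estimates for $K_s=e^{is\Delta}\chi$, at the price of needing the current bound to hold pointwise.

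The obstacle you flag is indeed the crux, and it is exactly where the paper's engineered weight enters: to get $W\ast|\nabla u_t|^2\le C\,W\ast|u_t|^2$ pointwise you write $\nabla u_t=\psi\ast u_t$ for a Schwartz $\psi$ and need $W\ast|\psi|\le CW$, which forces $W$ to have heavy polynomial tails bounded below as in~\eqref{poly-decay-bd} and Lemma~\ref{muB-conv-lemma}; a compactly supported or Gaussian $W$ fails this, and losslessness is essential since a factor $1+O(\delta)$ per step compounds to $e^{CR}$ over $R/\delta$ steps and ruins the $R$-uniform constants. Two smaller points to patch: the network has infinitely many cubes per slice, so MFMC and the path-flow decomposition require an exhaustion by finite subnetworks with an $\ell^1$-compactness limiting argument (this is the content of Lemma~\ref{one-layer-lemma}); and the pointwise bound~\eqref{ptwise-bd} at times strictly between the discrete slices needs one more application of the locally constant and short-time transport bounds to compare $|u_t(x)|^2$ with the cube mass $m_{j,k}$ at the nearest slice, as in~\eqref{m-dominated}.
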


\emph{Remark:} By rescaling the weights $w_i$ so that $\sum_i w_i = 1$, it
is possible to think of them as probabilities.  In this interpretation, $w_i$
is the probability that the particle represented by the wavefunction $u_t$
takes the path described by $\gamma$.  However, since the paths are far from
unique, this is not a very accurate description.

The proof of this theorem has two main ingredients.  The first ingredient
is a pair of inequalities which demonstrate two key properties 
of $u_t$.  The first inequality says that $u_t$
is \emph{locally constant}, in the sense that $|u_t(x)|^2$ can be bounded
pointwise in terms of a weighted average centered at $x$.  The second
says that $|u_t|^2$ has \emph{finite speed}.  The statement of the finite
speed property is slightly more complex, but suffice it to say that this is
where we encode the ``bounded momentum'' intuition.  
The second ingredient of the proof is combinatorial.  We state and 
prove an analogue of Theorem~\ref{tube-decomp-thm} in a discrete situataion.
The main idea here is to construct discrete paths
one step at a time.  The existence of these one-step paths is guaranteed
by the Max-Flow Min-Cut theorem.  

By itself, Theorem~\ref{tube-decomp-thm} is unsuitable for applications 
which require a variety of frequency localizations.  This is easily 
fixed with an application of scaling and Galilean symmetries.
\begin{cor}
  Let $u_t$ be a solution to the Schr{\"o}dinger equation with frequency
  localization
  \[
    \supp \hat{u_0} \subset B_{\rho}(\xi),
  \]
  where $\xi\in\Real^d$ is some average momentum, and $\rho$ is the 
  frequency uncertainty.  Let $r,V,C>0$ be the same constants given
  in Theorem~\ref{tube-decomp-thm}, and let $T>0$ be arbitrary.  
  Then there exists a set of paths
  $\{\gamma_i\}_{i=1}^\infty$ with $|\nabla \gamma_i - \xi|< V\rho$, 
  and weights $\{w_i\}_{i=1}^\infty$ such that  
  \[
    |u_t(x)|^2 \leq \sum_i w_i T_{\gamma_i, r\rho^{-1}}(x,t)
  \]
  and
  \begin{equation}
    \sum_{i} (r\rho^{-1})^d w_i \leq C \int_{\Real^d} |u_t(x)|^2\,dx.
    \label{scaled-L1-bd}
  \end{equation}
  \label{scaled-decomp-cor}
\end{cor}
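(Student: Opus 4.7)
The plan is to reduce Corollary~\ref{scaled-decomp-cor} to Theorem~\ref{tube-decomp-thm} by composing two familiar symmetries of the free Schr\"odinger flow: a Galilean boost that recenters the frequency support at the origin, and a parabolic rescaling that shrinks the frequency radius to~$1$.

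Concretely, I would first set $v_0(x) := e^{-i\xi\cdot x}u_0(x)$, so that $\supp \hat{v_0}\subset B_\rho(0)$, and record the standard Galilean identity $|u_t(x)|^2 = |v_t(x - 2\xi t)|^2$ (the factor $2$ reflects the normalization $i\partial_t + \Delta$, under which momentum $\xi$ corresponds to group velocity $2\xi$). I would then rescale by setting $w_0(x) := v_0(x/\rho)$, which gives $\supp \hat{w_0}\subset B_1$ and $v_t(y) = w_{\rho^2 t}(\rho y)$. Theorem~\ref{tube-decomp-thm}, applied to $w$ on the time interval $[-\rho^2 T, \rho^2 T]$, then supplies paths $\tilde\gamma_i$ with $|\nabla\tilde\gamma_i|\leq V$, tube radius $r$, and weights $\tilde w_i$ with $\sum_i r^d \tilde w_i \leq C\int |w_0|^2$.

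The rest is bookkeeping. The rescaling turns each $T_{\tilde\gamma_i,r}$ at the $w$-scale into a tube $T_{\gamma_i',\,r\rho^{-1}}$ at the $v$-scale, where $\gamma_i'(s) := \rho^{-1}\tilde\gamma_i(\rho^2 s)$ satisfies $|\nabla\gamma_i'|\leq V\rho$ by the chain rule. The Galilean boost then shifts the center of each tube by $2\xi s$: setting $\gamma_i(s) := \gamma_i'(s) + 2\xi s$ and $w_i := \tilde w_i$, I obtain $|u_t(x)|^2 \leq \sum_i w_i\, T_{\gamma_i,\, r\rho^{-1}}(x,t)$ with the velocity of $\gamma_i$ sitting within $V\rho$ of the group velocity associated to $\xi$. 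Finally, the change of variables $x = \rho y$ together with mass conservation gives
\begin{equation*}
 \sum_i (r\rho^{-1})^d w_i = \rho^{-d}\sum_i r^d \tilde w_i \leq \rho^{-d} C\int|w_0|^2 = C\int |u_0|^2 = C \int |u_t|^2,
\end{equation*}
which is \eqref{scaled-L1-bd}.

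No step poses a substantive analytical obstacle: the entire proof is symmetry plus a change of variables, and Theorem~\ref{tube-decomp-thm} does all the real work. The one place to stay alert is the convention relating momentum and velocity, so that the shift produced by the boost is correctly identified with the parameter $\xi$ appearing in the statement of the corollary.
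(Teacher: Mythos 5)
Your proposal is correct and is essentially the paper's own proof: the paper likewise reduces to Theorem~\ref{tube-decomp-thm} by conjugating with a Galilean boost and a parabolic rescaling (performed there as a single combined change of variables $u'(x,t) = e^{i(|\xi|^2t/\rho^2 - \xi\cdot x/\rho)}u(x\rho^{-1}+t\xi\rho^{-2},t\rho^{-2})$) and then transporting the tubes and weights back. One remark: your careful tracking of the dispersion relation gives $|\nabla\gamma_i - 2\xi|\leq V\rho$ under the normalization $i\partial_t u + \Delta u = 0$, whereas the corollary and the paper's boost formula use velocity $\xi$; this factor-of-two convention discrepancy originates in the paper, is harmless in the applications (only transversality of the tube directions is used), and you correctly flag it as the one point requiring care.
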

\begin{proof}[Proof of Corollary~\ref{scaled-decomp-cor} using 
  Theorem~\ref{tube-decomp-thm}]
  Consider the function 
  \[
    u'(x,t) = e^{i(|\xi|^2t/\rho^2 - \xi\cdot x/\rho)}
    u(x\rho^{-1} + t\xi\rho^{-2}, t\rho^{-2}),
  \]
  which is still a solution to the Schr{\"o}dinger equation (by Galilean and
  scale invariance), and which moreover satisfies the frequency
  localization $\supp \hat{u'} \subset B_1$.  Apply 
  Theorem~\ref{tube-decomp-thm} to $u'$ and then apply the scaling and 
  Galilean symmetries again in reverse.
\end{proof}

As mentioned above, Theorem~\ref{tube-decomp-thm} and 
Corollary~\ref{scaled-decomp-cor} seem unsuitable for some applications.
In particular, one apparent obstacle is that Theorem~\ref{tube-decomp-thm}
provides no guarantee about the \emph{dispersive} behavior of the tubes.  It
may be that there is a single tube which remains coherent for all time.  Thus,
Theorem~\ref{tube-decomp-thm} cannot by itself prove something such as the
Strichartz inequality.  However, there are examples of useful estimates in
which dispersion does not play as large a role.  The two we illustrate here
are the bilinear Strichartz estimate and the multilinear restriction theorem.

\subsection{Bilinear Strichartz}
The bilinear Strichartz estimate was introduced by Bourgain 
in~\cite{bourgain98}.  It handles the interaction between high-frequency and 
low-frequency solutions to the Schr{\"o}dinger equation.  
To state it, let
\[
  A_N := \{\xi; N/2\leq |\xi| \leq 2N\}
\]
denote the annulus at scale $N$.  

\begin{thm}[Bilinear Strichartz~\cite{bourgain98}]
  Let $M\ll N$, and let $u_0,v_0\in L^2(\Real^d)$ have the frequency 
  localizations $\supp \hat{u}\subset A_N$, $\supp \hat{v}\subset A_M$.  Then
  \begin{equation}
    \|u_t v_t\|_{L^2_{x,t}(\Real^d\times\Real)}
    \leq C \frac{M^{(d-1)/2}}{N^{1/2}} \|u_0\|_{L^2_x(\Real^d)}
    \|v_0\|_{L^2_x(\Real^d)}.
    \label{bilinear-Strich-bd}
  \end{equation}
  \label{bi-Strich-thm}
\end{thm}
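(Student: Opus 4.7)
The plan is to apply Corollary~\ref{scaled-decomp-cor} to both factors and exploit the fact that high-frequency tubes move much faster than low-frequency tubes, so any two intersecting tubes are transverse and contribute an intersection of small spacetime volume. To set this up, first cover $A_N$ by finitely-overlapping balls $B_{2M}(\xi_k)$ centered at an $M$-separated collection $\{\xi_k\} \subset A_N$, and write $u_0 = \sum_k u^{(k)}_0$ with $\supp \widehat{u^{(k)}_0} \subset B_{2M}(\xi_k)$. The Fourier support of each $u^{(k)} v$ lies in $B_{2M}(\xi_k) + A_M$, a set of diameter $O(M)$ centered near $\xi_k$, so the family of these supports has overlap multiplicity bounded by a constant depending only on $d$. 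Plancherel combined with Cauchy--Schwarz in $k$ therefore yields
\[
  \|uv\|_{L^2_{x,t}}^2 \leq C \sum_k \|u^{(k)} v\|_{L^2_{x,t}}^2.
\]

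Next, fix $R > 0$ and apply Corollary~\ref{scaled-decomp-cor} to $u^{(k)}$ (with momentum $\xi_k$ and uncertainty $2M$) and to $v$ (with momentum $0$ and uncertainty $2M$). Writing $r'$ for the common tube radius (of order $r/M$) and $V' = O(M)$ for the corresponding speed bound, we obtain paths $\gamma^{(k)}_i$ with $|\nabla \gamma^{(k)}_i - \xi_k| \leq V'$, paths $\gamma'_j$ with $|\nabla \gamma'_j| \leq V'$, and weights satisfying
\[
  \sum_i (r')^d w^{(k)}_i \leq C \|u^{(k)}_0\|_2^2, \qquad \sum_j (r')^d w'_j \leq C \|v_0\|_2^2.
\]
Multiplying the pointwise bounds for $|u^{(k)}_t|^2$ and $|v_t|^2$ and integrating over $\Real^d \times [-R,R]$,
\[
  \|u^{(k)} v\|_{L^2_{x,t}(|t|\leq R)}^2 \leq \sum_{i,j} w^{(k)}_i w'_j \bigl| T_{\gamma^{(k)}_i, r'} \cap T_{\gamma'_j, r'} \bigr|.
\]
Since $|\xi_k| \geq N/2$ while both velocity perturbations are $O(M) \ll N$, the relative speed $|\nabla \gamma^{(k)}_i - \nabla \gamma'_j|$ is at least $cN$. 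The two tube centers therefore remain within distance $2r'$ of each other only on a time interval of length $O(r'/N)$, so each intersection has measure at most $C(r')^{d+1}/N$. Substituting the $\ell^1$-bounds on the weights and simplifying yields
\[
  \|u^{(k)} v\|_{L^2_{x,t}(|t|\leq R)}^2 \leq C \frac{M^{d-1}}{N} \|u^{(k)}_0\|_2^2 \|v_0\|_2^2.
\]
Summing in $k$, using $L^2$-orthogonality of the $u^{(k)}_0$, and sending $R \to \infty$ by monotone convergence gives~\eqref{bilinear-Strich-bd}.

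The main subtlety is the orthogonality step: one must verify that the overlap multiplicity of the Fourier supports of the $u^{(k)} v$ is bounded uniformly in $M$ and $N$, which follows from the $M$-separation of $\{\xi_k\}$ and the $O(M)$ diameter of each support. Everything else is either elementary integration or a direct invocation of the tube decomposition. The transversality estimate on the tube intersections is the heart of the argument and is precisely where the $N^{-1/2}$ gain in~\eqref{bilinear-Strich-bd} appears---it mirrors the Jacobian computation in the standard Fourier-side proof.
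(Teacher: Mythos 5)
Your proof is correct, and it reaches \eqref{bilinear-Strich-bd} by a genuinely different decomposition from the paper's. The paper splits \emph{both} factors into a bounded number, $(100V)^d$, of pieces with $\supp\hat{u_i}\subset B_{N/(10V)}(N\xi_i)$ and $\supp\hat{v_j}\subset B_{M/(10V)}(M\xi_j)$, so the final summation involves only boundedly many cross terms and no orthogonality beyond the finite splitting is needed; the price is that the two tube families live at different scales, radius $\sim N^{-1}$ for the high-frequency factor and $\sim M^{-1}$ for the low one, and the intersection bound $\lesssim N^{-1-d}M^{-1}$ combines the thin tube's cross-section with the crossing time $(MN)^{-1}$. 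You instead leave $v$ untouched and chop only $u$ into roughly $(N/M)^d$ pieces at frequency scale $M$; this forces a genuine almost-orthogonality step --- bounded overlap of the supports $\supp\widehat{u^{(k)}_t v_t}\subset B_{2M}(\xi_k)+A_M$ for $M$-separated $\xi_k$, plus pointwise Cauchy--Schwarz and Plancherel --- which the paper's proof avoids entirely, but in exchange both tube families from Corollary~\ref{scaled-decomp-cor} have a single common radius $\sim M^{-1}$ and the (unbounded, $M,N$-dependent) number of pieces never enters the constants; this is closer in spirit to the standard physical-space argument. The heart of both proofs is the same transversality computation: each tube velocity is $O(M)$-close (respectively $O(N/10)$-close in the paper) to a fixed vector, the two fixed vectors differ by $\gtrsim N$, so the intersection volume is a cross-section times a width divided by $N$, and the weighted $L^1$ bounds \eqref{scaled-L1-bd} then produce the factor $M^{d-1}N^{-1}$. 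Two minor points: in passing from ``relative speed $\geq cN$'' to ``the centers stay within $2r'$ only for time $O(r'/N)$'' you implicitly use that the relative velocity stays $O(M)$-close to the fixed vector $\xi_k$ (so the separation along $\xi_k$ is monotone); mere lower bounds on the speed would not rule out the centers re-approaching, but since you state that proximity in the preceding clause this is only a wording issue, and your explicit $R\to\infty$ limit is in fact slightly more careful than the paper's own write-up, whose decomposition is likewise only furnished on $[-R,R]$.
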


This estimate is one of the ingredients in the I-method, 
which has been used to prove global well-posedness
for the nonlinear Schr{\"o}dinger equation for rough data\cite{CKSTT02}.  
The first 
proof of the bilinear Strichartz estimate relied heavily on properties of the 
Fourier transform which do not easily generalize to non-Euclidean settings.
Since the first proof by Bourgain, Tao has given a more physical-space proof
in~\cite{tao2010}.  Moreover it seems that the wavepacket approach to the 
bilinear Strichartz estimate for the wave equation described 
in~\cite{KRT02} might work for the Schr{\"o}dinger equation as well.  But 
to my knowledge,
the only proof of the bilinear Strichartz estimate on a manifold is due to 
Hani~\cite{hani2012}. Hani used this estimate and the I-method to prove a 
global well-posedness estimate for the cubic nonlinear Schr{\"o}dinger equation
in~\cite{hani2012global}. The approach in this paper to the bilinear 
Strichartz estimate is quite different from any mentioned above, and I hope 
that it may be extended to non-Euclidean settings such as hyperbolic space.  

\subsection{Multilinear Restriction}
The multilinear restriction estimate was first stated by Bennett, Carbery,
and Tao~\cite{BCT06}.  It is a version of the restriction problem which is 
simplified to handle only quantitatively transverse interactions.  Here
we work with a very special case, since we only want to show how the 
decomposition of Theorem~\ref{tube-decomp-thm} can be used.

\begin{thm}[Multilinear restriction~\cite{BCT06}]
  Let $u_i$ be solutions to the Schrodinger equation for $0\le i\le d$.\footnote{In the statement of this theorem we drop the notation that $u_t(x) = u(t,x)$.}
  Let $r=(10d)^{-1}$ and suppose that $u_0$ has 
  frequency support $\supp \hat{u_0} \subset B_r(0)$
  and the $u_i$ have frequency support $\supp \hat{u_i}\subset B_r(e_i)$,
  where $e_i$ is any orthonormal basis for $\Real^d$.  Then for every
  $\eps>0$ there is a constant $C_\eps>0$ such that 
  \begin{equation}
    \int_{|x|<R}\int_{|t|<R}
    \prod_{i=0}^d |u_i(t,x)|^{2/d} \le 
    C_\eps R^\eps \prod_{i=0}^d \|u_i\|_{L^2_x}^{2/d}
    \label{multi-rest-bd}
  \end{equation}
  for any $R>0$.
  \label{multi-rest-thm}
\end{thm}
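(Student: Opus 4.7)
\emph{Proof proposal.}
I would deduce Theorem~\ref{multi-rest-thm} from the tube decomposition by
applying Corollary~\ref{scaled-decomp-cor} to each of the $d+1$ solutions
and reducing the resulting bound to a multilinear Kakeya estimate for
Lipschitz tubes along a transverse frame of $\Real^{d+1}$.

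The plan is to apply Corollary~\ref{scaled-decomp-cor} separately to each
$u_i$ with frequency radius $\rho = (10d)^{-1}$ and momentum center $e_i$
(setting $e_0 := 0$). This produces, for each $i$, paths $\gamma_{i,j}$ and
weights $w_{i,j}$ with
\[
  |u_i(t,x)|^2 \leq \sum_j w_{i,j}\, T_{\gamma_{i,j}, \tilde r}(x,t),
  \qquad \sum_j \tilde r^d w_{i,j} \leq C \|u_i\|_{L^2_x}^2,
\]
where $\tilde r := 10 d\, r_0$ (with $r_0$ the dimensional constant from
Theorem~\ref{tube-decomp-thm}) and $|\nabla \gamma_{i,j} - e_i| < V/(10d)$.
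For $V/(10d)$ sufficiently small, which we may arrange since $V$ and $d$ are
dimensional constants, the spacetime tangent vectors $(\nabla \gamma_{i,j},
1) \in \Real^{d+1}$ form a small perturbation of the non-degenerate frame
$\{(e_i, 1)\}_{i=0}^d$.

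Raising each tube bound to the $\tfrac{1}{d}$-th power, taking the product,
and integrating over $Q_R := \{(x,t) : |x|<R,\,|t|<R\}$,
Theorem~\ref{multi-rest-thm} will follow once I establish the multilinear
Kakeya-type estimate
\begin{equation}
  \int_{Q_R} \prod_{i=0}^d \Big(\sum_j w_{i,j}\, T_{\gamma_{i,j}, \tilde r}\Big)^{1/d}
  \leq C_\eps R^\eps\, \tilde r^{d+1} \prod_{i=0}^d \Big(\sum_j w_{i,j}\Big)^{1/d}.
  \label{kakeya-proposal}
\end{equation}
Indeed, combining \eqref{kakeya-proposal} with the mass bound
$\sum_j \tilde r^d w_{i,j} \leq C \|u_i\|_{L^2}^2$ causes all powers of
$\tilde r$ to cancel, yielding \eqref{multi-rest-bd}.

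The main obstacle is \eqref{kakeya-proposal}. In the idealized case of
straight tubes ($\gamma_{i,j}$ linear with slope exactly $e_i$), a linear
change of variables sends the frame $\{(e_i,1)\}$ to the coordinate axes
and \eqref{kakeya-proposal} follows from the Loomis--Whitney inequality with
no $R^\eps$ loss. For the Lipschitz tubes produced by the decomposition I
would adapt the induction-on-scales strategy of Bennett--Carbery--Tao:
localize $Q_R$ to sub-cubes of side $\sim \tilde r$ on which each path is
essentially linear, apply the straight case locally, and bootstrap the bound
from scale $\tilde r$ up to scale $R$, absorbing the accumulated
multiplicative losses into the $R^\eps$ factor. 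Note that the naive
subadditivity $(\sum_j a_j)^{1/d} \leq \sum_j a_j^{1/d}$ is far too wasteful
when many small-weight tubes are present, so the Loomis--Whitney cancellation
between overlapping tubes along the transverse frame is essential.
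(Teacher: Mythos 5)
Your overall route (apply Corollary~\ref{scaled-decomp-cor} to each factor, then feed the resulting weighted tubes into a multilinear Kakeya estimate for Lipschitz tubes) is the same as the paper's, but as written there are two genuine gaps. First, the directional control you get from Corollary~\ref{scaled-decomp-cor} with $\rho=(10d)^{-1}$ is $|\nabla\gamma_{i,j}-e_i|<V/(10d)$, and your claim that ``we may arrange $V/(10d)$ sufficiently small'' is false: $V$ is a fixed constant produced by Theorem~\ref{tube-decomp-thm} and $d$ is the fixed dimension, so $V/(10d)$ is a fixed number that may well exceed $1$. With such a spread the spacetime tangents $(1,\nabla\gamma_{i,j})$ need not be close to the frame $\{(1,-2\xi_i)\}$ at all --- they may not even be transverse --- whereas the Lipschitz-tube multilinear Kakeya theorem (Theorem~\ref{mult-kakeya-thm}) only applies to curves that are $\delta$-close to the transverse directions, with $\delta$ depending on $\eps$ and the transversality constant $\nu$. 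The missing step, which is exactly how the paper proceeds, is to first prove the estimate for solutions with frequency support in balls of radius $\delta=\tilde\delta/V$ (so the corollary gives velocity spread $V\delta=\tilde\delta$ as small as the Kakeya theorem demands), and then decompose each $u_i$ into the finitely many such pieces and sum; the number of pieces depends only on $\eps$ and $d$, so this costs only a constant.

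Second, you propose to prove the needed multilinear Kakeya inequality for Lipschitz tubes yourself by localizing to cubes of side $\sim\tilde r$, applying Loomis--Whitney there, and ``bootstrapping'' up to scale $R$. That sketch is not a proof: naive iteration of the local estimate loses a multiplicative constant at each of the $\sim\log(R/\tilde r)$ scales, which compounds to a power of $R$ rather than $R^\eps$; controlling this is precisely the content of the Bennett--Carbery--Tao induction on scales, and extending it to curved (Lipschitz) tubes is a nontrivial result in its own right. The paper sidesteps this entirely by quoting Guth's theorem (Theorem~\ref{mult-kakeya-thm}) as a black box --- indeed the paper's stated contribution is only the passage from the Schr\"odinger evolution to the tube-incidence problem, explicitly avoiding induction on scales. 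Your $\tilde r$ bookkeeping (the factor $\tilde r^{d+1}$ cancelling against $\sum_j\tilde r^d w_{i,j}\lesssim\|u_i\|_{L^2}^2$) is correct, and matches the paper's device of splitting the width-$r\delta^{-1}$ tubes into unit tubes; but you should cite the Lipschitz-tube Kakeya theorem rather than attempt to reprove it, and you must add the finite frequency sub-decomposition described above before the directional hypotheses of that theorem are met.
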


This kind of estimate has been used to make progress on the restriction 
problem~\cite{BG11} and
more recently to prove the $\ell^2$ decoupling conjecture~\cite{BD14}.  
We remark
that the main difficulty in proving the multilinear restriction is an
incidence geometry problem about tubes (see~\cite{guth2015short} for a short 
discussion of the problem), and our only contribution is to provide a new
derivation of the restriction estimate from the geometric problem.  In
particular this derivation avoids the use of induction on scales.

\subsection{Plan of the Paper}
The paper is organized as follows.  Section~\ref{ests-section} contains a 
derivation of the locally constant and finite speed properties to frequency
localized solutions.  Section~\ref{discrete-decomp-section} proves a
discrete analogue of the decomposition.  In Section~\ref{decomp-section},
the full proof of Theorem~\ref{tube-decomp-thm} is given.
Section~\ref{applications-section} proves the bilinear Strichartz estimate,
Theorem~\ref{bi-Strich-thm}, and the multilinear restriction estimate, 
Theorem~\ref{multi-rest-thm}, using
the tube decomposition in Corollary~\ref{scaled-decomp-cor}.

\subsection*{Acknowledgements}
I would like to thank Larry Guth for useful comments.
I also thank Vincent Tjeng for 
showing me how to use the Max-Flow Min-Cut theorem to prove 
Lemma~\ref{one-layer-lemma}.

\section{Properties of Frequency-Localized solutions}
\label{ests-section}
In this section we prove variants of the locally constant (LC) and 
finite speed (FS) properties for solutions $u_t$ to the Schrodinger equation
satisfying 
\begin{equation}
  \supp\hat{u_0}\subset B_1
  \label{freq-local-eq}.
\end{equation}
\subsection{Locally Constant}
We begin with the locally constant property.  We say that a bounded
function $f\in L^\infty(\Real^d)$ is 
\emph{localized to the unit ball} if there exists some constant
$C$ such that $|f(x)| \leq C$ and 
\[
  |f(x)|\leq C|x|^{-10d}, \quad |\nabla f(x)| \leq C|x|^{-10d-1}.
\]
\begin{propn}
  There exists some positive function $\mu$ which is localized to the 
  unit ball and such that for any 
  function $u_0$ satsfying the frequency~\eqref{freq-local-eq}, 
  \begin{equation}
    \sup_{y\in B_1(x)} |u_0(y)|^2 \leq C\int |u_0(y)|^2 \mu(x-y)\,dy.
    \tag{LC}
    \label{LC-bd}
  \end{equation}
  Moreover, $\mu$ can be chosen such that the translates of $\mu$ form 
  a partition of unity,
  \begin{equation}
    \sum_{a\in \Integer^d} \mu(x-a) = 1,
    \label{Z-po1-eq}
  \end{equation}
  and such that 
  \begin{equation}
    c |x|^{-10d} \leq \mu.
    \label{poly-decay-bd}
  \end{equation}
  \label{LC-propn}
\end{propn}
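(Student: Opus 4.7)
The plan is to use the frequency localization through a reproducing-kernel identity. Fix any Schwartz function $\phi$ with $\phi\equiv 1$ on $B_1$; since $\supp\hat{u_0}\subset B_1$ we have $\hat{u_0}=\phi\hat{u_0}$, equivalently $u_0=\check\phi*u_0$. Writing $\psi(w):=|\check\phi(w)|$, this gives the pointwise bound $|u_0(y)|\leq\int\psi(y-z)\,|u_0(z)|\,dz$. The Schwartz decay of $\psi$ implies $\psi(w)\leq C_N(1+|w|)^{-N}$ for every $N$, and for $y\in B_1(x)$ the quantities $|y-z|$ and $|x-z|$ differ by at most $1$, so $\psi(y-z)\leq C_N'(1+|x-z|)^{-N}$. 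Squaring and applying Cauchy--Schwarz (writing $\psi=\sqrt\psi\cdot\sqrt\psi$ to factor out the $L^1$ mass of the weight) then yields
\[
\sup_{y\in B_1(x)}|u_0(y)|^2\leq C\int\nu(x-z)\,|u_0(z)|^2\,dz, \qquad \nu(w)=(1+|w|)^{-N},
\]
for any $N>d$. This is already the locally-constant inequality with $\nu$ in place of $\mu$, with constant $C$ depending on $N$ and $\phi$ but not on $u_0$.

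To upgrade $\nu$ to a $\mu$ that satisfies both the partition-of-unity identity and the polynomial lower bound, I specialize to the concrete weight $\nu_0(x):=(1+|x|^2)^{-5d}$ and define
\[
S(x):=\sum_{a\in\Integer^d}\nu_0(x-a), \qquad \mu(x):=\nu_0(x)/S(x).
\]
Since $\nu_0$ decays faster than $|x|^{-d}$, the series defining $S$ converges to a smooth, $\Integer^d$-periodic function bounded above and below by positive constants. The periodicity of $S$ yields the partition-of-unity identity~\eqref{Z-po1-eq}, and the two-sided bounds on $S$ together with the corresponding asymptotics of $\nu_0$ give both $\mu(x)\leq C(1+|x|)^{-10d}$ and the polynomial lower bound~\eqref{poly-decay-bd}. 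The LC bound~\eqref{LC-bd} for $\mu$ then follows from the one just proved for $\nu_0$ (since $\mu\geq \nu_0/\max S$), absorbing the factor of $\max S$ into $C$.

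The main technical obstacle I anticipate is verifying the derivative bound $|\nabla\mu(x)|\leq C|x|^{-10d-1}$ required by the definition of ``localized to the unit ball.'' Differentiating $\mu=\nu_0/S$ produces a cross term $\nu_0\,\nabla S/S^2$ that naively decays only like $|x|^{-10d}$, one order short, because $\nabla S$ is periodic and bounded but does not vanish. I would repair this by choosing the partition-of-unity weight with slightly sharper decay, say $(1+|x|^2)^{-(5d+1/2)}$, so that the cross term gains the missing factor of $|x|^{-1}$, and then mixing $\mu$ with a small, partition-of-unity--preserving correction to restore the exact lower-bound asymptotic $c|x|^{-10d}$. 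The remaining work is routine constant chasing.
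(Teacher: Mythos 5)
Your derivation of \eqref{LC-bd} is correct and is essentially the paper's own argument: use the frequency localization \eqref{freq-local-eq} to write $u_0=\check\phi\ast u_0$ for a Schwartz reproducing kernel, apply Cauchy--Schwarz, and absorb the supremum over $B_1(x)$ into a polynomially decaying weight (the paper does this by replacing $|\chi|$ with $\sup_{|z-y|\le 1}|\chi(z)|$, you do it by comparing $(1+|y-z|)^{-N}$ with $(1+|x-z|)^{-N}$; same idea). The normalization step is also in the paper's spirit (divide a polynomially decaying majorant by its $\Integer^d$-periodization), and the parts you actually verify --- the identity \eqref{Z-po1-eq}, the two-sided $|x|^{-10d}$ bounds, and the transfer of \eqref{LC-bd} from $\nu_0$ to $\mu$ --- are fine.

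The gap is exactly the one you flag, and your proposed repair does not close it. Being ``localized to the unit ball'' includes $|\nabla\mu(x)|\le C|x|^{-10d-1}$, and this is not cosmetic: it is precisely what Lemma~\ref{bd-A-lemma} uses in the form $|\partial_j\mu_A|\le C\sum_{a\in A}|x-a|^{-10d-1}$, so Proposition~\ref{LC-propn} without it does not support the rest of the argument. Dividing by the periodization always produces the cross term $\nu_0\nabla S/S^2$, of size $|\nabla S(x)|\,(1+|x|)^{-10d}$, and since $\nabla S$ is periodic and not identically zero, no choice of constants recovers the $|x|^{-10d-1}$ decay. Sharpening the numerator to $(1+|x|^2)^{-(5d+1/2)}$ does fix the gradient but destroys the lower bound \eqref{poly-decay-bd}, which is part of the statement; and ``mixing with a small partition-of-unity--preserving correction'' is circular, since the correction would itself have to be a partition-of-unity member satisfying both $\mu\ge c(1+|x|)^{-10d}$ and $|\nabla\mu|\le C(1+|x|)^{-10d-1}$ --- exactly the object you are trying to build. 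A clean repair: take $\phi\ge 0$ smooth and compactly supported with $\sum_{a\in\Integer^d}\phi(x-a)=1$ (e.g.\ a compactly supported bump divided by its periodization, which is harmless because everything is supported in a compact set), let $h(w)=c_d(1+|w|^2)^{-5d}$ normalized so $\int h=1$, and set $\mu=\phi\ast h$. Then $\sum_a\mu(x-a)=\int h=1$ by Fubini; $\mu$ is comparable to $(1+|x|)^{-10d}$ and $|\nabla\mu|=|\phi\ast\nabla h|\lesssim(1+|x|)^{-10d-1}$, because convolving with a compactly supported bump preserves these asymptotics; and \eqref{LC-bd} transfers from your $\nu$ exactly as in your last step, since $\mu\ge c(1+|x|)^{-10d}$. (For what it is worth, the paper's own normalization --- dilating a majorant and dividing by its periodization --- is terse on this same point; arranging the periodization to be exactly constant, as above, is what actually secures the stated gradient decay.)
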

\emph{Remark:} The extra conditions~\eqref{Z-po1-eq} and~\eqref{poly-decay-bd}
can be tacked on by starting with some $\mu''$ localized to the unit ball
and then making it larger.  At first glance this seems quite lossy; the point
however is that in the proof we will use the bound~\eqref{LC-bd} only once,
and the extra mass in $\mu$ will allow us to arrive at a 
finite speed property (in the next subsection) which has no loss of constants, 
and which will be iterated many times.

\emph{Remark:} The bound in~\eqref{poly-decay-bd} could be tweaked in several
ways.  For the purposes of the next section, it would suffice to know that
$\mu$ is smooth (approximately locally constant on unit scales) and that 
\[
  \int_{E} |\partial_i \mu| \leq C \int_{\partial E} \mu
\]
whenever $E\subset \Real^d$ satisfies $E\cap B_1 = \emptyset$.  This is
a more general condition which may be useful in non-Euclidean settings, 
but the easiest way I know to verify it is to enforce
that $\mu$ has polynomial decay, hence the bound~\eqref{poly-decay-bd}. 

\begin{proof}
Let $\chi\in L^2(\Real^d)$ satisfy $\hat{\chi}(\xi) = 1$ for $|\xi|\leq 1$ and
$\hat{\chi}(\xi) = 0$ outside $|\xi|\geq 2$, with a smooth cutoff in between.
By choosing the function appropriately, we can
enforce that $\chi$ localized to the unit ball.
The Fourier localization of $u_0$ yields $u_0 = \chi\ast u_0$.  This allows
us to bound, using Cauchy-Schwartz,
\begin{align*}
  |u(x)|^2 = |\chi\ast u(x)|^2 
  &= \left| \int u(y) \chi(x-y)\,dy \right|^2
  \\&\leq  \left(\int |\chi(x-y)|\,dy\right) \int |u(y)|^2 |\chi(x-y)|\,dy 
  \\&= c_d \int |u(y)|^2 |\chi(x-y)|\,dy.
\end{align*}
Now define the function $\mu''$ by
\[
  \mu''(y) = \sup_{|z-y|\leq 1} |\chi(z)|.
\]
The fact that $\chi$ is localized to the unit ball implies that $\mu''$ is also,
with a larger constant.  We now modify $\mu''$ to satisfy~\eqref{Z-po1-eq}
and~\eqref{poly-decay-bd}.  As long as these modifications only increase $\mu'$,we can ensure that~\eqref{LC-bd} is still satisfied.

First, since $\mu''$ is localized to the unit ball, we can find some function
$\mu'$ which is smooth, is localized to the unit ball, has polynomial
decay $\mu' \geq c|x|^{-10d}$, and satisfies $\mu'' \leq \mu'$.  Moreover,
each dilation $\mu'_R(x) = \mu'(x/R)$ is still localized to the unit ball
and satisfies~\eqref{poly-decay-bd} with different constants.  By making 
$R$ large enough the periodic function
\[
  p(x) = \sum_{a\in \Integer^d} \mu(x-a) 
\]
becomes very smooth, so that $\mu = \mu'(x) / p(x)$ which 
satisfies~\eqref{Z-po1-eq} still obeys all previous bounds.  
\end{proof}

\subsection{Finite Speed}
In this section we prove a version of the finite speed property for frequency
localized solutions to the Schrodinger equation.  We first 
establish some notation for the section.  Define the set
\begin{equation}
  H := \{h\in\Integer^d; \max_{1\leq i\leq d} |h_i| \leq 1\},
  \label{H-defn-eq}
\end{equation}
which has cardinality $3^d$ and is the cube centered at the origin.  Given
any set $A\subset\Integer^d$, we let $A+H$ denote the sumset
\[
  A+H := \{a+h\in\Integer^d; a\in A, h\in H\}.
\]
In other words, $A+H$ consists of all lattice points in $\Integer^d$ which are
within a distance $1$ of $A$ in the $\ell^\infty$ norm.  We also define
$\partial A = (A+H) \setminus A$, the lattice points that are within $1$ of $A$
but are not in $A$.  

We will also be using $\mu$ and its translates from Proposition~\ref{LC-propn}.
For convenience, let $\mu_a(x) = \mu(x-a)$ be the function $\mu$ centered
at the lattice point $a\in\Integer^d$.  Moreover, for any set 
$B\subset\Integer^d$, we write $\mu_B = \sum_{b\in B} \mu_b$.
\begin{propn}
  Let $u_0$ have frequency localization as in~\eqref{freq-local-eq}
  and $u_t = e^{it\Delta}u_0$ be the free Schrodinger evolution of $u_0$.
  Moreover let $\mu$ satisfy the conclusions of Proposition~\ref{LC-propn}.

  Then there exists some time $\tau>0$ and such that for any 
  $A\subset\Integer^d$ and $0<t<\tau$,
  \begin{align}
    \begin{split}
      \int |u_t|^2 \mu_A &\leq \int |u_0|^2 \mu_{A+H}, \\ 
      \int |u_0|^2 \mu_A &\leq \int |u_t|^2 \mu_{A+H}.
      \label{FS-bd}
    \end{split}
      \tag{FS}
  \end{align}
  \label{FS-propn}
\end{propn}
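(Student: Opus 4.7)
The plan is to carry out a weighted energy estimate.  The Schr{\"o}dinger equation yields the continuity identity $\partial_s |u_s|^2 + 2\Div j_s = 0$ for the probability current $j_s = \Imgnry(\bar{u}_s\nabla u_s)$, so after integration by parts
\[
  \left|\frac{d}{ds}\int |u_s|^2 \mu_A\,dx\right|
  \;=\; 2\left|\int j_s\cdot\nabla\mu_A\,dx\right|.
\]
Since $u_s$ and $\nabla u_s$ are both frequency-localized in $B_1$ (the Fourier support is preserved under differentiation), I would apply Proposition~\ref{LC-propn} to each---after first enlarging $\mu$, as permitted by the remarks following that proposition, so that the same $\mu$ dominates the convolution kernels for both $u$ and $\nabla u$.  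This gives $|u_s(x)|^2,\;|\nabla u_s(x)|^2 \leq C(|u_s|^2 * \mu)(x)$, whence by Cauchy--Schwartz, $|j_s(x)| \leq C(|u_s|^2 * \mu)(x)$.  Combining with Fubini,
\[
  \left|\frac{d}{ds}\int |u_s|^2 \mu_A\right|
  \;\leq\; C\int |u_s(y)|^2\,(\mu * |\nabla\mu_A|)(y)\,dy.
\]

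The key geometric step is to establish a pointwise bound of the form
\[
  (\mu * |\nabla\mu_A|)(y)
  \;\leq\; \frac{1}{\tau}\bigl(\mu_{A+H}(y) - \mu_A(y)\bigr)
\]
for some fixed $\tau > 0$ depending only on $d$ and the chosen $\mu$.  Since $\mu_A$ is a truncation of a partition of unity, the identity $\sum_{a\in\Integer^d}\nabla\mu(x-a) = 0$ gives $\nabla\mu_A = -\nabla\mu_{A^c}$; together with the polynomial decay~\eqref{poly-decay-bd}, this forces $\nabla\mu_A$ to be concentrated near $\partial A$ with size comparable to $\mu_{\partial A} = \mu_{A+H} - \mu_A$.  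Convolution with $\mu$ preserves this concentration up to unit-scale smoothing, and any residual multiplicative constants can be absorbed into $\tau$ by further enlarging $\mu$---precisely the role of the extra mass emphasized in Proposition~\ref{LC-propn}.

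With this estimate in hand,
\[
  \frac{d}{ds}\int|u_s|^2\mu_A
  \;\leq\; \frac{C}{\tau}\int |u_s|^2\,(\mu_{A+H}-\mu_A),
\]
and a Gronwall-type comparison across the nested family $A\subset A+H\subset A+2H\subset\cdots$, combined with $L^2$ mass conservation, closes the argument to yield $\int|u_t|^2\mu_A \leq \int|u_0|^2\mu_{A+H}$ for $0<t<\tau$ with no loss of constant.  The reverse inequality is obtained by time-reversal: the function $v(x,s) := \overline{u(x,-s)}$ also solves the Schr{\"o}dinger equation with initial data $\overline{u_0}$ (whose Fourier support is again contained in $B_1$), and applying the forward bound to $v$ converts it into $\int|u_0|^2\mu_A \leq \int|u_t|^2\mu_{A+H}$ for $0<t<\tau$.

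The main obstacle will be the pointwise geometric estimate for $\mu * |\nabla\mu_A|$.  One has to confirm, uniformly in $A\subset\Integer^d$ and in particular for pathological or very thin boundaries, that the bulk part of $\nabla\mu_A$ cancels enough to leave only a boundary-layer contribution comparable to $\mu_{A+H}-\mu_A$, and moreover that all implicit constants can be absorbed by enlarging $\mu$.  The Gronwall-type closure is also delicate: any loss of constant per time step would compound through the many iterations for which FS is used, so obtaining the sharp constant $1$ in a single step (via the slack baked into $\mu$) is essential.
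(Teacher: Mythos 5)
Your setup (continuity equation, integration by parts, and the pointwise bound $\mu\ast|\nabla\mu_A|\lesssim \mu_{A+H}-\mu_A$, which is in substance the paper's Lemma~\ref{bd-A-lemma} combined with Lemma~\ref{muB-conv-lemma}, including the cancellation trick $\nabla\mu_A=-\nabla\mu_{A^c}$) matches the paper's proof. But the place where you diverge is exactly where the proposal has a genuine gap: after the flux estimate you are left with
\[
  \frac{d}{ds}\int|u_s|^2\mu_A \;\leq\; \frac{C}{\tau}\int|u_s|^2\,\mu_{\partial A},
\]
with $|u_s|^2$ on the right, and you propose to close this by a ``Gronwall-type comparison across $A\subset A+H\subset A+2H\subset\cdots$.'' That closure is not available in the form you need. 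The inequality is self-referential: controlling $\int|u_s|^2\mu_{\partial A}$ for $0<s<t$ is precisely an instance of the statement being proved, so iterating forces an infinite hierarchy over the sets $A+kH$. Summing that hierarchy gives tail terms $\int|u_0|^2\mu_{A+kH}$ for $k\geq 2$ with small coefficients, but these are only bounded by the global mass $\|u_0\|_{L^2}^2$, which is not dominated by $\int|u_0|^2\mu_{A+H}$ (the mass of $u_0$ may sit far from $A$); at best one gets $\int|u_t|^2\mu_A\leq C''\int|u_0|^2\mu_{A+H}$ with $C''>1$. You correctly flag that the constant must be exactly $1$ --- \eqref{FS-bd} is iterated $\sim R/\tau$ times in Section~\ref{decomp-section}, so any per-step loss compounds fatally --- but the proposal supplies no mechanism that actually produces constant $1$.

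The paper's mechanism, which is the one missing idea, is to never let $|u_s|^2$ appear on the right-hand side at all: write $u_s=K_s\ast u_0$ with $K_s=e^{is\Delta}\chi$, where $\chi$ reproduces $u_0$ thanks to the frequency localization, and observe that $\int|K_s(y)|(1+|y|^{10d})\,dy$ and $\int|\partial_jK_s(y)|(1+|y|^{10d})\,dy$ are uniformly bounded for small $s$. Then Cauchy--Schwartz together with Lemma~\ref{bd-A-lemma} and Lemma~\ref{muB-conv-lemma} bounds the time-integrated flux by $C\tau\int|u_0|^2\mu_{\partial A}$, i.e.\ by data at time zero only. Choosing $\tau<1/C$ gives
\[
  \int|u_t|^2\mu_A \;\leq\; \int|u_0|^2\mu_A+\int|u_0|^2\mu_{\partial A}
  \;=\;\int|u_0|^2\mu_{A+H},
\]
with no Gronwall step and no loss: the ``slack'' is the full extra layer $\mu_{\partial A}$, not a multiplicative constant absorbed into $\mu$. (Your time-reversal reduction for the second inequality in \eqref{FS-bd} is fine and matches the paper's remark.) To repair the proposal, replace the Gronwall closure by this kernel representation of $u_s$ in terms of $u_0$.
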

\emph{Remark:} By time reversal symmetry, the equations in~\eqref{FS-bd} are
equivalent, so we only focus on proving the first.  

We will need a few lemmas in order to complete the proof of this proposition.
\begin{lemma}
  There exists a constant $C>0$ such that for any $A\subset\Integer^d$
  and any $1\leq j\leq d$,
  \begin{equation}
    |\partial_j \mu_A| \leq C |\mu_{\partial A}|.
    \label{bd-A-bd}
  \end{equation}
  \label{bd-A-lemma}
\end{lemma}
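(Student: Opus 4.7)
\emph{Plan.} I start by differentiating the partition of unity from Proposition~\ref{LC-propn}: since $\sum_{a\in\Integer^d}\mu(x-a)=1$, differentiating in the $j$-th coordinate yields $\sum_{a\in\Integer^d}\partial_j\mu(x-a)=0$. Hence $\partial_j\mu_A(x)=-\partial_j\mu_{A^c}(x)$, and the triangle inequality gives
\[
|\partial_j\mu_A(x)|\leq \sum_{a\in A^c}|\partial_j\mu(x-a)|.
\]
The crucial analytic input is a refined pointwise comparison $|\partial_j\mu(y)|\leq C\mu(y)/(1+|y|)$, obtained by dividing $|\nabla\mu(y)|\leq C|y|^{-10d-1}$ by the lower bound $\mu(y)\geq c|y|^{-10d}$ from Proposition~\ref{LC-propn}, supplemented by uniform bounds for $|y|\leq 1$.

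Next I split $A^c=\partial A\cup(A^c\setminus\partial A)$. The contribution from $\partial A$ gives $\sum_{a\in\partial A}|\partial_j\mu(x-a)|\leq C\mu_{\partial A}(x)$ directly, which is of the required form. For the interior contribution, where $a$ sits at $\ell^\infty$-distance $\geq 2$ from $A$, I would assign a witness $b(a)\in\partial A$ by stepping one lattice unit toward $A$ along a shortest $\ell^\infty$-path from $a$. Fixing $b\in\partial A$, the set of preimages $\{a:b(a)=b\}$ fits inside a cone emanating outward from $b$, at most $O(k^{d-1})$ points at depth $k$. The polynomial comparability $\mu(x-a)\leq C(1+|a-b|)^{10d}\mu(x-b)$ (valid thanks to the lower bound in \eqref{poly-decay-bd}), combined with the extra $1/(1+|x-a|)$ from the refined estimate, should allow the tail series $\sum_{a:b(a)=b}\mu(x-a)/(1+|x-a|)$ to be summed to $\leq C\mu(x-b)$. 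Summing over $b\in\partial A$ then delivers the lemma.

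The main obstacle is this final combinatorial step: verifying uniformly in $x$, $A$, and $b$ that the $1/(1+|x-a|)$ gain together with the polynomial comparability dominates the volume growth of the witness cone. The ratio $\mu(x-a)/\mu(x-b)$ can degrade in subtle ways when $x$ lies far from both points, so the relative position of $x$ with respect to the cone must be handled carefully. If a direct cone count is unwieldy, a cleaner alternative is to iterate: apply the partition-of-unity identity with $A$ replaced by $A\cup\partial A$, so that each round ``peels off a shell'' and the residual interior contribution gains an extra factor of $|y|^{-1}$ from the smoothness of $\mu$; a summable geometric series in the shell index then closes the estimate.
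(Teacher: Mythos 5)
Your opening move --- differentiating the partition of unity to get $\partial_j\mu_A=-\partial_j\mu_{A^c}$ and then applying the triangle inequality over $A^c$ \emph{unconditionally} --- already commits you to a false intermediate claim. Take $A=\{a_0\}$ a single lattice point and $x$ with $|x-a_0|$ huge. The lemma itself is easy there: $|\partial_j\mu_A(x)|=|\partial_j\mu(x-a_0)|\leq C|x-a_0|^{-10d-1}$, while $\mu_{\partial A}(x)\geq c|x-a_0|^{-10d}$ by~\eqref{poly-decay-bd}. But your intermediate goal $\sum_{a\in A^c}|\partial_j\mu(x-a)|\leq C\mu_{\partial A}(x)$ fails: the left side includes all the translates with $a$ near $x$ and is bounded below by a fixed positive constant (as $\partial_j\mu\not\equiv 0$), whereas the right side tends to $0$ as $|x-a_0|\to\infty$. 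Discarding the signs in the sum over $A^c$ is only admissible when $x$ is far from $A^c$, i.e.\ deep inside $A$. This is precisely why the paper's proof is organized as a case analysis on the position of $x$: near $\partial A$ the bound is trivial (there $\mu_{\partial A}\gtrsim 1$ while $|\partial_j\mu_A|\lesssim 1$); when $x$ is far from $A$ one estimates the sum over $A$ directly; and only when $x$ is far from $A^c$ does one invoke the cancellation $\partial_j\mu_A=-\partial_j\mu_{A^c}$. Your proposal has no such split, so the argument as written is not a proof of~\eqref{bd-A-bd}.

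Even in the regime where the complement trick is legitimate ($\dist(x,A^c)$ large), the summation scheme you set up does not close, as you yourself flag: the comparability $\mu(x-a)\leq C(1+|a-b|)^{10d}\mu(x-b)$ (the analogue of~\eqref{translate-mu-bd}) loses a factor $(1+|a-b|)^{10d}$ at depth $k\approx|a-b|$, while the refined estimate only gains $(1+|x-a|)^{-1}$; against $O(k^{d-1})$ preimages per depth the series in $k$ diverges. Two ways to repair that step. First, drop the comparability and keep the raw decay $|\partial_j\mu(x-a)|\lesssim(1+|x-a|)^{-10d-1}$, but choose the witness by projecting $a$ \emph{toward $x$}: since $x$ lies deep in $A$ and $a\in A^c$, the segment $[x,a]$ crosses $\partial A$ near some $b$, the preimages of a fixed $b$ at distance $k$ from $x$ lie in a cone of aperture $\sim|x-b|^{-1}$, hence number $O((k/|x-b|)^{d-1})$, and $\sum_{k\geq|x-b|}(k/|x-b|)^{d-1}k^{-10d-1}\lesssim|x-b|^{-10d}\lesssim\mu(x-b)$, which sums over $b\in\partial A$ to $C\mu_{\partial A}(x)$. (Your witness map, which steps toward $A$ rather than toward $x$, does not produce this aperture gain.) Second, the paper's route avoids witness counting entirely: it compares $\sum_a(1+|x-a|)^{-10d-1}$ to an integral over $\Cube(A^c)$, writes the integrand as a divergence, and integrates by parts so that only a surface integral of $|x-\cdot|^{-10d}$ over the exposed faces survives, which is then dominated by $\mu_{\partial A}(x)$ via~\eqref{poly-decay-bd}. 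Your shell-peeling fallback does not fix either problem: it still ignores the position of $x$, each peel replaces $\partial A$ by the boundary of a larger set so the comparison back to $\mu_{\partial A}$ costs a constant factor $>1$ per shell, and the claimed per-shell gain of $|y|^{-1}$ is not a uniform geometric gain, so the product need not converge.
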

\begin{proof}
  Let $x\in \Real^d$, and let $\dist(x,\partial A)$ denote the minimum distance
  between $x$ and a point in $\partial A$.  We split the analysis into
  cases.  The first case occurs when $x$ is near the boundary of $A$, 
  so that $\dist(x,\partial A) < 100d$.  Then
  $\mu_{\partial A}(x) = \Omega_d(1)$, and $|\partial_j \mu_A| = O_d(1)$, 
  so the bound holds for some constant.

  The second case is when $x$ is in the ``exterior'' of $A$,
  so $\dist(x,A)>100d$.  Then we write, 
  expanding the definition of $\mu_A$, applying the triangle 
  inequality, and using the fact that $\mu$ is localized to the unit ball
  \[
    |\partial_j \mu_A| \le C\sum_{a\in A} |x-a|^{-10d-1}.
  \]
  We would like to turn this sum into an integral.  Let 
  $\Cube(A)\subset\Real^d$ denote the union of unit cubes centered at the 
  points of $A$.  Since we are well away from the origin, the function
  $|y|^{-10d-1}$ doesn't change much on unit scales, so 
  \[
    \sum_{a\in A} |x-a|^{-10d-1}
    \leq C \int_{\Cube(A)} |x-a|^{-10d-1}\,da.
  \]
  Now observe that the integrand can be written as 
  $-9d\partial_i (x_i |x|^{-10d-1})$, so by integrating by parts we obtain
  \[
  \int_{\Cube(A)} |x-a|^{-10d-1}\,da
    \leq c\int_{\partial \Cube(A)} |x-a|^{-10d}\,d\sigma(a),
  \]
  where $d\sigma$ denotes the surface measure on the exposed faces
  of the cubes.  We can assign each face of an exposed cube to a point in 
  $\partial A$, with each point in $\partial A$ being chosen at most
  $2^d$ times.  Again because we are away from the origin, the function
  $|y|^{-10d}$ doesn't change much on unit scales,
  \[
    \int_{\partial \Cube(A)} |x-a|^{-10d}\,d\sigma(a)
    \leq C \sum_{a\in \partial A} |x-a|^{-10d}.
  \]
  Applying~\eqref{poly-decay-bd} and chaining the inequalities 
  we can conclude that 
  \[
    |\partial_j \mu_A| \le C |\mu_{\partial A}|.
  \]

  Finally we must handle the case that $x$ is in the ``interior'' of 
  $A$, so that $\dist(x,A) < 100d$.  This time the bound
  \[
    |\partial_j \mu_A| \leq  C\sum_{a\in A} |x-a|^{-10d-1}
  \]
  is much too lossy because of the cancellation involved.  To exploit this
  cancellation, we observe that because $\mu$ forms a partition of unity, 
  \[
    |\partial_j \mu_A| = |\partial_j \mu_{A^c}|,
  \]
  where $A^c = \Integer^d\setminus A$ is the complement of $A$.  Now 
  $\dist(x, A^c) > 100d$, so apply the same argument as above.
\end{proof}

\begin{lemma}
  Let $K\in\Schwartz(\Real^d)$ be a Schwartz function (smooth and rapidly
  decaying).  Then there exists $C>0$, independent of $K$,
  such that for any $B\subset\Integer^d$,
  \begin{equation}
    |K\ast \mu_B(x)|\leq C\left(\int |K(y)|(1+|y|^{10d})\,dy\right)\mu_B(x)
    \label{muB-conv-bd}
  \end{equation}
  \label{muB-conv-lemma}
\end{lemma}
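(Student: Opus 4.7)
The plan is to reduce the lemma to a single pointwise comparison for $\mu$ itself, namely
\begin{equation}
  \mu(z-y) \le C(1+|y|^{10d})\,\mu(z) \quad\text{for all } z,y\in\Real^d. \label{plan-pointwise}
\end{equation}
Once~\eqref{plan-pointwise} is in hand, specializing to $z=x-b$ and summing over $b\in B$ gives $\mu_B(x-y)\le C(1+|y|^{10d})\,\mu_B(x)$, and then
\[
  |K\ast \mu_B(x)| \le \int |K(y)|\,\mu_B(x-y)\,dy
  \le C\,\mu_B(x)\int|K(y)|(1+|y|^{10d})\,dy,
\]
which is exactly~\eqref{muB-conv-bd}.

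So the main step is to establish~\eqref{plan-pointwise}. The idea is to play the polynomial \emph{upper} bound on $\mu$ coming from ``$\mu$ is localized to the unit ball'' against the matching polynomial \emph{lower} bound $\mu(x)\ge c|x|^{-10d}$ from~\eqref{poly-decay-bd}. Writing both in the comparable global form $\mu(z)\asymp (1+|z|)^{-10d}$ (valid because $\mu$ is positive, continuous, and bounded above and below on any compact neighborhood of the origin), I get
\[
  \mu(z-y)\le C_1(1+|z-y|)^{-10d}, \qquad \mu(z)\ge c_1(1+|z|)^{-10d}.
\]
The triangle inequality $1+|z|\le (1+|z-y|)(1+|y|)$ then yields
\[
  (1+|z-y|)^{-10d}\le (1+|y|)^{10d}(1+|z|)^{-10d}\le \frac{1}{c_1}(1+|y|)^{10d}\,\mu(z),
\]
so $\mu(z-y)\le (C_1/c_1)(1+|y|)^{10d}\,\mu(z)$, and absorbing the constant in passing from $(1+|y|)^{10d}$ to $1+|y|^{10d}$ gives~\eqref{plan-pointwise}.

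The main thing to notice is that the argument crucially uses the fact that the polynomial upper bound on $\mu$ and the polynomial lower bound~\eqref{poly-decay-bd} have the \emph{same} exponent $10d$: this is what allows the Peetre-type inequality from the triangle inequality to cleanly dominate $\mu(z-y)/\mu(z)$ by a polynomial weight in $|y|$ alone. There is no obstacle beyond this weight comparison; both the reduction to~\eqref{plan-pointwise} and the ability to sum uniformly over $B\subset\Integer^d$ are immediate from linearity and positivity of $\mu$.
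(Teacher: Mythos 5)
Your proof is correct and follows essentially the same route as the paper: both reduce the lemma to the pointwise translate comparison $\mu(x-y)\leq C(1+|y|^{10d})\mu(x)$ and then bound the convolution by pulling this weight inside the integral against $|K|$. The only difference is cosmetic: the paper verifies that comparison by a three-case analysis in $|x|$ versus $|y|$, while you obtain it in one stroke from the two-sided bound $\mu(z)\asymp(1+|z|)^{-10d}$ together with the Peetre-type inequality $1+|z|\leq(1+|z-y|)(1+|y|)$, which is a slightly cleaner packaging of the same information.
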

\begin{proof}
  By the triangle inequality and translation invariance, 
  it suffices to show that 
  \[
    |K\ast \mu(x)| \leq C\mu(x).
  \]
  Now let $y\in \Real^d$.  We claim that
  \begin{equation}
    \mu(x-y) \leq C(1 + |y|^{10d}) \mu(x).
    \label{translate-mu-bd}
  \end{equation}
  Again we split the analysis into cases.  The first case 
  occurs when $|x|\leq 50|y|$ and $|y|>10$. 
  Then $\mu(x) \geq c|50y|^{-10d}$, so 
  \[
    \mu(x-y) \leq C \leq C |y|^{10d} \mu(x).
  \]
  The second case is $|x|\leq 50|y|$ and $|y|\le 10$. Then
  both $\mu(x)$ and $\mu(x-y)$ are $\Theta(1)$.  Finally, if $|x|>50|y|$
  then
  \[
    \mu(x-y) \sim \mu(x) 
  \]
  because both are $\Theta(|x|^{-10d})$.  This concludes the proof of
  the claim~\eqref{translate-mu-bd}.

  We use this claim to bound the convolution:
  \begin{align*}
    |K\ast \mu(x)| &= | \int K(y) \mu(x-y)\,dy | \\
    &\leq \int |K(y)| |\mu(x-y)|\,dy \\
    &\leq C \mu(x) \int |K(y)| (1+|y|^{10d})\,dy.
  \end{align*}
  Now because $K$ is rapidly decaying, the integral on the right is bounded
  by some constant so we are done.
\end{proof}

\begin{proof}[Proof of Proposition~\ref{FS-propn}]
  Subtract $|u_0|^2 \mu_A$ from both sides of~\eqref{FS-bd}, apply the 
  fundamental theorem of calculus, and use the local conservation of mass
  \[ 
    \partial_t |u_t|^2 = \partial_j \Imgnry (\bar{u_t} \partial_j u_t)
  \]
  to note that it suffices to show
  \[
    \int (|u_t|^2-|u_0|^2)\mu_A 
  = \int \left(\int_0^t \partial_s |u_s|^2\,ds \right)\mu_A
  = -\int
  \left(\int_0^t\partial_j\Imgnry(\overline{u_s}\partial_j u_s)\,ds
  \right) \mu_A
  \leq \int |u_0|^2 \mu_{\partial A}.
\]
We may integrate the LHS by parts.  Moreover we use
$u_s = e^{is\Delta} u = e^{is\Delta}(\chi\ast u) = (e^{is\Delta}\chi)\ast u$,
and define $K_s = e^{is\Delta}\chi$.  We can rewrite this inequality
as
\[
  \mathrm{Im}( \int_0^t \int 
  (\overline{K_s}\ast \overline{u_0})(\partial_j K_s\ast u_0) \partial_j \mu_A)
  \leq \int |u_0|^2 \mu_{\partial A}.
\]
We now need a few facts about $K_s$ and $\partial K_s$.  They are both 
concentrated in the ball $|x|\lesssim s$ and have rapidly decaying tails (the
estimate on the tails is a repeated application of integration by parts).  
In particular, the quantities 
\[
  \int |K_s(y)|(1+|y|^{10d})\,dy, \quad \int |\partial_j K_s(y)|(1+|y|^{10d})
  \,dy
\] 
are uniformly bounded near $s=0$.  We can therefore
apply Cauchy-Schwartz several times to bound the LHS by
\begin{align*}
  \mathrm{Im}( \int_0^t \int 
  (\overline{K_s}\ast \overline{u_0})(\partial_j K_s\ast u_0) \partial_j \mu)
  &\leq
  \int_0^t \int (|K_s|\ast |u_0|)(|\partial_j K_s|\ast|u_0|)|\partial_j\mu_A| \\
  &\leq C
  \int_0^t 
  \left(\int (|K_s|\ast |u_0|)^2|\partial_j\mu_A|\right)^{1/2}
  \left(\int (|\partial_j K_s|\ast|u_0|)^2 |\partial_j\mu_A|\right)^{1/2} \\
  &\leq C \tau
  \sup_{0<s<\tau, 1\le j\le d}\{\int |u_0|^2 |K_s|\ast|\partial_j \mu_A|, 
  \int |u_0|^2 |\partial_j K_s|\ast |\partial_j\mu_A|\}.
\end{align*}
By applying Lemma~\ref{bd-A-lemma} followed by Lemma~\ref{muB-conv-lemma},
we can bound the right hand side by
\[
  C\tau \int |u_0|^2 |\mu_{\partial A}|.
\]
Upon taking $\tau<1/C$ we are done.
\end{proof}

\section{The Discrete Situation}
\label{discrete-decomp-section}
In the continuous situation, we have an evolving mass distribution $|u_t|^2$ 
and we would like to understand it in terms of packets moving around according
to Lipschitz paths.  This can be discretized in space and time to come up
with the following set-up.  We have a (possibly infinite) 
digraph\footnote{In our notation, a digraph has directed edges but it is 
allowed for both the edges $(u,v)$ and $(v,u)$ to exist.} $G = (V,E)$
with bounded degree (self-loops are allowed).  The nodes represent a
discretization of the space (in our case $V=\Integer^d$), and edges represent
possible movements of mass within a timestep (in our case $(a,b)\in\Integer^2$
is an edge if $\|a-b\|_{\ell^\infty} \leq 1$).  Instead of an 
evolving mass distribution $|u_t|^2$ there
is a sequence of mass distributions $w_i:V\to\Real^+$ for $i=1,2,\cdots, N$.  
We say a sequence of vertices $p = (p(1),\cdots, p(N))$ is a path of
length $N$ if $(p(i),p(i+1))$ is an edge for each $1\le i<N$.  The set 
of all such paths is denoted $\mathcal{P}_N$.  
We want to describe this evolution of mass as a sum of packets that each
move according to different paths.

\begin{question}
  Is it possible to find a weighting $\alpha:\mathcal{P}_N\to\Real^+$
  on the set of paths $\mathcal{P}_N$ of length $N$
  such that for all $1\le i\le N$, and all vertices $v\in V$,
  \[
    w_i(v) = \sum_{p\in \mathcal{P}; p(i)=v} \alpha(p)?
  \]
\end{question}

Sometimes it is impossible to come up with such a weighting.  For
a trivial example, there may be no edges in $E$ at all, and so there can be
no paths.  The only allowable mass distribution is the trivial one, 
$w_i = 0$.  A less trivial constraint is that $w_i$ must 
satisfy a conservation of mass.  Indeed, observe that  
\[
  \sum_{v\in V} w_i(v) = \sum_{v\in V} \sum_{p\in \mathcal{P}; p(i)=v}
  \alpha(p)
  = \sum_{p\in\mathcal{P}} \alpha(p)
\]
is independent of $i$.  In fact, $w_i$ must satisfy a \emph{local}
conservation law.  To state it, we make a few definitions.
For $A\subset V$ define $N^+(A)$, 
the outgoing neighborhood of $A$, to be the set
\[
  N^+(A) := \{v\in V; (a,v)\in E \text{ for some } a\in A\}.
\]
Similarly define the incoming neighborhood,
\[
  N^-(A) := \{v\in V; (v,a)\in E \text{ for some } a\in A\}.
\]
By an abuse of notation we will write $N^{\pm}(v)$ instead
of $N^{\pm}(\{v\})$ when $A$ consists of a single vertex $v$.  

The key property here is that if $p$ is a path and $p(i)\in A$, then 
$p(i+1)\in N^+(A)$ (and $p(i-1)\in N^-(A)$).  Thus,
\[
  \sum_{a\in A} w_i(a) = \sum_{p(i)\in A} \alpha(p)
  \leq \sum_{p(i+1)\in N^+(A)} \alpha(p) = \sum_{b\in N^+(A)} w_i(b).
\]
Likewise
\[
  \sum_{a\in A} w_i(a) \leq \sum_{b\in N^-(A)} w_{i-1}(b).
\]
The answer to our question above is that these local conservation laws are
not only necessary but also sufficient.

\begin{propn}
  Let $G=(V,E)$ as above, and let $w_i:V\to\Real^+$ be a sequence of positive
  weights on $V$.  Suppose that for every $A\subset V$, and $1\le i<N$
  the local conservation law holds,
  \begin{equation}
    \sum_{a\in A} w_i(a) \leq \sum_{b\in N^+(A)} w_{i+1}(b),
    \label{local-cons-bd}
  \end{equation}
  and in addition the global conservation law holds,
  \begin{equation}
    \sum_{v\in V} w_{1}(v) = \sum_{v\in V} w_{N}(v).
    \label{global-cons-bd}
  \end{equation}
  Then there exists a weighting $\alpha:\mathcal{P}_N\to\Real^+$ on the set
  of paths of length $N$ such that for every $1\le i\le N$ and every
  $v\in V$,
  \begin{equation}
    w_i(v) = \sum_{p(i) = v} \alpha(p).
    \label{discrete-decomp-eq}
  \end{equation}
  \label{discrete-decomp-propn}
\end{propn}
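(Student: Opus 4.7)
The plan is to reduce the proposition to a single-step bipartite transportation problem, handled by the Max-Flow Min-Cut theorem (the ingredient mentioned in the acknowledgments as Lemma~\ref{one-layer-lemma}), and then to assemble the single-step flows into paths by induction on $N$. As a preliminary I would observe that the total mass is conserved: applying~\eqref{local-cons-bd} with $A=V$ and using $N^+(V)\subseteq V$ shows that $\sum_v w_i(v)$ is monotone nondecreasing in $i$, and~\eqref{global-cons-bd} then forces it to be constant in $i$.

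Next, for each consecutive pair $(w_i,w_{i+1})$ I would construct a single-step flow $f_i:E\to\Real^+$ with out-marginal $w_i$ and in-marginal $w_{i+1}$, that is $\sum_{b:(v,b)\in E} f_i(v,b)=w_i(v)$ and $\sum_{a:(a,v)\in E} f_i(a,v)=w_{i+1}(v)$. This is the transportation problem on the bipartite network with a source $s$ feeding each left vertex $a$ with capacity $w_i(a)$, a sink $t$ drained by each right vertex $b$ with capacity $w_{i+1}(b)$, and infinite-capacity middle edges $a\to b$ whenever $(a,b)\in E$. Any finite $s$-$t$ cut decomposes into a set $A\subseteq V$ on the left (with complement's source edges cut) and a set $B\supseteq N^+(A)$ on the right (whose sink edges are cut), with capacity $\sum_{a\notin A} w_i(a)+\sum_{b\in B} w_{i+1}(b)$. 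This quantity is bounded below by the common total $\sum_v w_i(v)$ \emph{precisely} by the local conservation hypothesis~\eqref{local-cons-bd} applied to $A$, so min-cut equals max-flow equals the total mass, producing the desired $f_i$.

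Finally, I would build the path weighting inductively on the path length. Start with $\alpha_1(\{v\}):=w_1(v)$. Given $\alpha_i$ on $\mathcal{P}_i$ satisfying $\sum_{p(j)=u}\alpha_i(p)=w_j(u)$ for all $1\le j\le i$, extend each path $p$ ending at $v=p(i)$ to each extension $(p,b)$ with $b\in N^+(v)$ by
\[
  \alpha_{i+1}(p,b):=\alpha_i(p)\cdot\frac{f_i(v,b)}{w_i(v)},
\]
with the convention $0/0=0$, which is consistent since $w_i(v)=0$ forces $\alpha_i(p)=0$ for every such $p$. The earlier marginals survive because $\sum_b f_i(v,b)/w_i(v)=1$, and the new marginal at $(i+1,b)$ computes to $\sum_v f_i(v,b)=w_{i+1}(b)$. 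Iterating through $i=1,\ldots,N-1$ produces the required $\alpha$.

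I expect the main obstacle to be the single-step max-flow min-cut step, which is where the hypothesis~\eqref{local-cons-bd} is actually consumed; beyond that the only wrinkle is the possibly infinite vertex set $V$, which a standard exhaustion by finite subgraphs should handle since the bounded-degree assumption keeps every cut computation local and each $w_i$ has at most countable effective support.
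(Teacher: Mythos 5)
Your proposal is correct and follows essentially the same route as the paper: the single-step marginals are produced by Max-Flow Min-Cut on a source--sink transportation network (the paper's Lemma~\ref{MFMC-lemma} and Lemma~\ref{one-layer-lemma}, where the local conservation law is consumed in exactly the cut estimate you describe), and the path weighting is then assembled by the same multiplicative/Markov-chain recursion $\alpha_{i+1}(p,b)=\alpha_i(p)f_i(v,b)/w_i(v)$ that the paper phrases probabilistically. The only place the paper is more explicit is the infinite-$V$ issue you flag at the end: it carries out the exhaustion via an $\ell^1$-compactness and diagonalization argument for the flows $f_{A_k}$, using global conservation once more in the limit to force the sink edges to saturate.
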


First we prove the Proposition in the case $N=2$. 
Then we deduce the case of general $N$ from the $N=2$ case.

\subsection{The case $N=2$}
When $N=2$, Proposition~\ref{discrete-decomp-propn} is a consequence of 
the Max-Flow Min-Cut (MFMC) theorem.  An introduction to the MFMC theorem
can be found online, for example see~\cite{Joseph07, Staples-Moore}. 
The theorem is originally due to Ford and Fulkerson~\cite{ford1956maximal}.  
Usually this theorem is stated in the
case of a finite network, but here our vertex set $V$ is infinite.  In general
this can be problematic, but the weights we consider are absolutely 
summable so there is no real difficulty.  

Now we set up notation so that we may state the MFMC theorem.  Let $U$ be a 
set of nodes with two distinguished elements, $s$ and $t$, which represent
the source and sink respectively. Let $c:U\times U\to\Real^+$.  We say $c$ is a
\emph{capacity} function if $c(u,v)>0$ implies $c(v,u) = 0$.  That is,
if we think of $c$ as being the capacity of a network of pipes connecting
the nodes of $U$, the pipes must only go in one direction.  A \emph{cut} 
is a subset $S\subset U$ such that $s\in S$ and $t\not\in S$.  
Given a capacity function $c$,
we define the \emph{capacity} of the cut $S$, written $\Capacity S$, by
\[
  \Capacity S = \sum_{u\in S, v\not\in S} c(u,v).
\]
The dual object of a cut is a flow.  A \emph{flow} is a function 
$f:U\times U\to\Real^+$ satisfying the Kirchoff's laws
\[
  f_{in}(v) := \sum_{u\in U} f(u,v) = \sum_{u\in U} f(v,u) =: f_{out}(v)
\]
for all $v\in U\setminus\{s,t\}$.  The \emph{value} of a flow is given 
by 
\[
  \val f = \sum_{u\in U} f(s,u).
\]
Because $f$ satisfies Kirchoff's laws except on $s$ and $t$, we may also write
\[
  \val f = \sum_{u\in U} f(u,t).
\]
The MFMC theorem relates the maximum value of a flow to the minimum capacity
of a cut.
\begin{thm}[Max-Flow Min-Cut~\cite{ford1956maximal}]
  Let $U$ be a finite vertex set with source $s\in U$ and sink $t\in U$,
  and let $c:U\times U\to\Real^+$ be a capacity on $U$.
  Then there exists a flow $f:U\times U\to\Real^+$ with $f\leq c$ such that 
  \[
    \val f = \min_{S\subset U} \Capacity S,
  \]
  where the minimum ranges over cuts $S\subset U$ with $s\in S$ and 
  $t\not\in S$.
\end{thm}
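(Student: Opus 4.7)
The plan is to prove the theorem in the standard two-step fashion: first establish weak duality, that $\val f \leq \Capacity S$ for every flow $f$ and every cut $S$, and then exhibit a matching pair by constructing a maximum flow via the Ford--Fulkerson augmenting-path procedure.

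For weak duality, fix a cut $S$ and a flow $f \leq c$. Starting from the definition $\val f = \sum_{u} f(s,u)$ (which equals $f_{\out}(s)-f_{\inflow}(s)$ since $s$ has no pre-existing balance constraint), add the identity $f_{\out}(v)-f_{\inflow}(v)=0$ over every $v\in S\setminus\{s\}$. Every edge with both endpoints in $S$ contributes $+f$ and $-f$ and telescopes away, leaving
\[
  \val f = \sum_{u\in S,\, v\notin S} f(u,v) - \sum_{u\notin S,\, v\in S} f(u,v).
\]
Since $f\geq 0$ and $f(u,v)\leq c(u,v)$, this is at most $\sum_{u\in S, v\notin S} c(u,v)=\Capacity S$.

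For strong duality, introduce the residual network $G_f$ on $U$ where the edge $(u,v)$ has residual capacity $r_f(u,v) := c(u,v)-f(u,v)+f(v,u)$; the capacity-function condition $c(u,v)>0 \Rightarrow c(v,u)=0$ ensures this is well-defined. An \emph{augmenting path} is an $s$--$t$ walk in $G_f$ along edges of strictly positive residual capacity; given such a path, increasing $f$ by the minimum residual along it produces a new feasible flow of strictly larger value. Starting from $f\equiv 0$, iterate this augmentation until no augmenting path exists. Then let $S$ be the set of vertices reachable from $s$ in $G_f$; by construction $s\in S$ and $t\notin S$, and every edge crossing from $S$ into $U\setminus S$ satisfies $r_f(u,v)=0$, which (together with the one-way condition on $c$) forces $f(u,v)=c(u,v)$ and $f(v,u)=0$. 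Plugging this into the telescoped identity above gives $\val f = \Capacity S$, proving the theorem.

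The main obstacle is termination of the augmenting-path procedure: for integer capacities each augmentation increases $\val f$ by at least one and $\val f$ is bounded by $\Capacity\{s\}$, so termination is immediate; for rational capacities one may clear denominators; and for general nonnegative reals one invokes the Edmonds--Karp refinement of always choosing a shortest augmenting path (by edge count), which bounds the number of augmentations by $O(|U|\,|E|)$. Once termination is handled, the two halves above combine to give both $\val f\leq\Capacity S$ for all $S$ and $\val f=\Capacity S$ for the specific $S$ produced, yielding the max-flow--min-cut equality.
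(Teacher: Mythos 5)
The paper never proves this theorem: it is quoted as a black-box classical result with a citation to Ford--Fulkerson, and the paper's actual work lies in how it is applied (Lemma~\ref{MFMC-lemma} and the limiting argument of Lemma~\ref{one-layer-lemma}). So there is nothing in the paper to compare against line by line; what you have written is the standard self-contained proof, and it is essentially correct. Your weak-duality step (telescoping the Kirchhoff identities over $S$) and your strong-duality step (residual capacities $r_f(u,v)=c(u,v)-f(u,v)+f(v,u)$, augment until no $s$--$t$ path of positive residual exists, take $S$ to be the reachable set, and read off $f(u,v)=c(u,v)$, $f(v,u)=0$ across the cut) are the classical argument, and you were right to flag termination as the real issue for real-valued capacities and to resolve it with the Edmonds--Karp shortest-path rule, since plain Ford--Fulkerson can fail to terminate or even converge to a non-maximal value over the reals.

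One small point of care, which is really an artifact of the paper's definitions rather than an error of yours: the paper defines $\val f=\sum_u f(s,u)$, i.e.\ the gross outflow of $s$, and your telescoping identity actually computes the net quantity $f_{\out}(s)-f_{\inflow}(s)$; your parenthetical ``since $s$ has no pre-existing balance constraint'' does not make these equal (a flow circulating on a cycle through $s$ has positive gross value but zero net value, and weak duality in the gross sense can then fail, e.g.\ when the min cut is $0$). This is harmless for the theorem as stated, because the flow you construct by augmenting along simple $s$--$t$ paths starting from $f\equiv 0$ never routes flow into $s$, so gross and net values coincide for it; it would be cleaner either to say this explicitly or to state weak duality for the net value and note the coincidence at the end. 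With that caveat, your proof is complete and supplies exactly the ingredient the paper imports by citation.
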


Notice that flows, which are weights on edges, can also be thought of weights
on the set of paths of length $2$.  This provides the main connection
between MFMC and the $N=2$ case of Proposition~\ref{discrete-decomp-propn}.
To illustrate this connection better, we set forth some definitions.
Let $V_1$ and $V_2$ be two identical copies of $V$, and let
$U=\{s,t\}\cup V_1\cup V_2$.  Let $c:U\times U\to\Real^+$ be the capacity
function defined by
\begin{equation}
  c(u,u') = 
  \begin{cases}
    w_1(u') &\mbox{if } u = s, u'\in V_1 \\
    w_1(u)  &\mbox{if } u \in V_1, u'\in N^+(u) \\
    w_2(u)  &\mbox{if } u \in V_2, u' = t \\
    0       &\mbox{else.}
  \end{cases}
  \label{cap-defn-eq}
\end{equation}
If we could apply MFMC directly to $U$ with capacity function $c$, then the 
local conservation laws~\eqref{local-cons-bd} and~\eqref{global-cons-bd} would
ensure that there is a suitable flow.  

Unfortunately $U$ as described may be 
infinite.  We therefore need to apply MFMC to finite subnetworks of $U$.  
Let $A\subset V$ be a finite subset of $V$, and let 
$U_A = \{s,t\}\cup A_1 \cup N^+(A_1) \subset U$, where 
we think of $A_1\subset V_1$ and $N^+(A_1)\subset V_2$.  We can also 
define the capacity $c_A = c|_{U_A}$, which is simply the restriction of 
$c$ to $U_A$.  We can apply MFMC to the network described by the nodes $U_A$
and capacity $c_A$ to obtain the following result.
\begin{lemma}
  Let $A\subset V$, and let $U_A$ and $c_A$ be as described above.  Then there
  exists a flow $f_A:U_A\times U_A\to\Real^+$ such that $f_A\leq c_A$ and
  \[
    f_A(s,u) = w_1(u) 
  \]
  for every $u\in A$. 
  \label{MFMC-lemma}
\end{lemma}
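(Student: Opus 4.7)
The plan is to apply MFMC to the finite network $(U_A, c_A)$ and show that the minimum cut has value exactly $\sum_{u\in A} w_1(u)$. Since the source edges $s\to u$ for $u\in A_1$ have total capacity $\sum_{u\in A} w_1(u)$, the max flow value is trivially bounded above by this quantity. Once we prove the matching lower bound on the min cut, MFMC produces a flow $f_A$ whose value equals $\sum_{u\in A} w_1(u)$; this forces every source edge to be saturated, giving the claimed identity $f_A(s,u)=w_1(u)$ for every $u\in A$.

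The heart of the argument is therefore the cut lower bound. Given an arbitrary cut $S\subset U_A$ with $s\in S$, $t\notin S$, I would decompose $A_1$ into $A_S:=A_1\cap S$ and $A_{\bar S}:=A_1\setminus S$, and similarly split $N^+(A_1)$ into $B_S$ and $B_{\bar S}$. Three families of edges can be cut: source edges $s\to u$ for $u\in A_{\bar S}$ (contributing $w_1(u)$ each), middle edges $u\to u'$ with $u\in A_S$ and $u'\in N^+(u)\cap B_{\bar S}$ (contributing $w_1(u)$ each), and sink edges $u\to t$ for $u\in B_S$ (contributing $w_2(u)$ each).

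To turn this into a useful lower bound, define $A'_S:=\{u\in A_S : N^+(u)\subseteq B_S\}$, the vertices in $A_S$ all of whose out-neighbors lie on the source side. For any $u\in A_S\setminus A'_S$, at least one middle edge out of $u$ is cut, contributing at least $w_1(u)$ to the cut capacity. Meanwhile, $N^+(A'_S)\subseteq B_S$, so the local conservation law \eqref{local-cons-bd} gives $\sum_{u\in A'_S} w_1(u)\le \sum_{u\in N^+(A'_S)} w_2(u) \le \sum_{u\in B_S} w_2(u)$. Combining the three contributions yields
\[
  \Capacity(S) \ge \sum_{u\in A_{\bar S}} w_1(u) + \sum_{u\in A_S\setminus A'_S} w_1(u) + \sum_{u\in A'_S} w_1(u) = \sum_{u\in A} w_1(u),
\]
which is exactly the bound needed.

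The main obstacle is precisely the middle family of edges: a cut that puts part of $A$ on the source side but omits some of $N^+(A_S)$ from $S$ must pay for the dangling middle edges, and one needs to recognize that the per-vertex cost $w_1(u)$ paid for such a ``bad'' vertex matches the $w_1(u)$ that would otherwise have been charged through the local conservation law. Once this bookkeeping is done correctly, the MFMC theorem applied to the finite network $U_A$ delivers the lemma immediately.
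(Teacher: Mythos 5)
Your proof is correct and follows essentially the same route as the paper: lower-bound the capacity of an arbitrary cut by isolating the vertices of $A$ on the source side whose entire out-neighborhood is also on the source side (your $A'_S$ is the paper's $S'_1$), charge the remaining source-side vertices to cut middle edges, apply the local conservation law \eqref{local-cons-bd} with $i=1$, and then conclude from MFMC that the saturated source edges give $f_A(s,u)=w_1(u)$. No substantive differences to report.
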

\begin{proof}
  Consider the flow network on $U_A$ with capacity $c_A$.  Let $S\subset U_A$
  be a cut.  We can write $S=\{s\}\cup S_1\cup S_2$,
  where $S_1\subset A_1$ and $S_2\subset N^+(A)$.  Now we write $\Capacity S$
  using the definition~\eqref{cap-defn-eq} of $c$
  \begin{align*}
    \Capacity S &= \sum_{u\in A_1\setminus S_1} c(s,u)
    + \sum_{\substack{u\in S_1 \\ v\in N^+(S_1)\setminus S_2}} c(u,v)
    + \sum_{v\in S_2} c(v,t) \\
    &= \sum_{u\in A_1\setminus S_1} w_1(u)
    + \sum_{\substack{u\in S_1 \\ c\in N^+(S_1)\setminus S_2}} w_1(u)
    + \sum_{v\in S_2} w_2(v).
  \end{align*}
  Define a subset $S'_1\subset S_1$ by
  \[
    S'_1 := \{u\in S_1; N^+(u) \subset S_2\}.
  \]
  If $u\in S_1\setminus S'_1$, then there is some $v\in N^+(u)$ such that 
  $v\not\in S_2$, so that $c(u,v) = w_1(u)$ contributes $w_1(u)$ to the 
  sum in $\cap S$.  Moreover we have $S_2 \subset N^+(S'_1)$, so 
  \begin{align*}
    \Capacity S &\geq \sum_{u\in A_1\setminus S'_1} w_1(u)
    + \sum_{v\in N^+(S'_1)} w_2(v) \\
    &\geq \sum_{u\in A_1\setminus S'_1} w_1(u)
    + \sum_{u\in S'_1} w_1(u) = \sum_{u\in A_1} w_1(u)
  \end{align*}
  where we have used the local conservation law~\eqref{local-cons-bd} with 
  $i=1$.  The expression on the RHS is exactly $\Capacity(\{s\})$, so by 
  MFMC there exists a flow $f_A$ on $U_A$ with $f_A\leq c_A$ such that 
  \[
    \val f_A = \sum_{u\in A} w_1(u).
  \]
  On the other hand since $f_A(s,u)\leq c(s,u) = w_1(u)$,
  \[
    \val f_A = \sum_{u\in A} f_A(s,u) \leq \sum_{u\in A} w_1(u).
  \]
  Thus $f_A(s,u) = w_1(u)$ for all $u\in A$, as desired.
\end{proof}

A compactness and limiting argument allows us to make a conclusion about the
entire network $U$.  Formulated in slightly different notation, we obtain
the $N=2$ case of Proposition~\ref{discrete-decomp-propn}.
\begin{lemma}
  Let $G=(V,E)$ be as in Proposition~\ref{discrete-decomp-propn},
  and let $w_1$ and $w_2$ be two weights on $V$ satisfying~\eqref{local-cons-bd}
  with $i=1$ and~\eqref{global-cons-bd} for $N=2$. 
  Then there exists a function $f:E\to \Real^+$ on the edges such that 
  \[
    w_1(v) = \sum_{u\in N^+(v)} f(v,u)
  \]
  and
  \[
    w_2(v) = \sum_{u\in N^-(v)} f(u,v).
  \]
  \label{one-layer-lemma}
\end{lemma}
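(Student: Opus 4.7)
The plan is to construct the desired edge flow $f$ on $E$ as a subsequential pointwise limit of the finite flows produced by Lemma~\ref{MFMC-lemma}. Choose an exhaustion $A_1\subset A_2\subset\cdots$ of $V$ by finite subsets, and for each $k$ let $f_{A_k}$ be the flow on $U_{A_k}$ given by Lemma~\ref{MFMC-lemma}, so that $f_{A_k}\le c_{A_k}$ and $f_{A_k}(s,u)=w_1(u)$ for every $u\in A_k$. Extend each $f_{A_k}$ to a function $f_k:E\to\Real^+$ by setting $f_k(u,v)=f_{A_k}(u,v)$ when $u\in A_k$ and $f_k(u,v)=0$ otherwise. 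Since $f_k(u,v)\le w_1(u)$ for every edge, a standard diagonal argument along the countable edge set produces a subsequence along which $f_k(e)$ converges for every $e\in E$; call the limit $f(e)$.

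Now I verify the two required identities. Fix $v\in V$; for $k$ large enough that $v\in A_k$, combining $f_{A_k}(s,v)=w_1(v)$ with Kirchoff's law at $v$ gives
\[
  \sum_{u\in N^+(v)} f_{A_k}(v,u) = w_1(v).
\]
The bounded-degree hypothesis makes $N^+(v)$ finite, so passing to the limit preserves this finite sum and yields $\sum_{u\in N^+(v)} f(v,u)=w_1(v)$. Similarly, Kirchoff at $v$ together with the capacity bound $f_{A_k}(v,t)\le c(v,t)=w_2(v)$ gives $\sum_{u\in N^-(v)\cap A_k} f_{A_k}(u,v)\le w_2(v)$, and since $N^-(v)$ is finite this passes to the limit as
\[
  \sum_{u\in N^-(v)} f(u,v)\le w_2(v).
\]

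The last step is to upgrade this inequality to equality using the global conservation law. Because all summands are non-negative, sums over $E$ may be rearranged freely, so
\[
  \sum_{v\in V}\sum_{u\in N^-(v)} f(u,v)
  = \sum_{u\in V}\sum_{v\in N^+(u)} f(u,v)
  = \sum_{u\in V} w_1(u) = \sum_{v\in V} w_2(v),
\]
where the last equality is~\eqref{global-cons-bd}. Combined with the pointwise bound just obtained and the summability of $w_2$, this forces equality at every $v$, which completes the proof. The only genuine issue is the passage from finite subnetworks to the infinite graph, but the uniform bound $f_k(u,v)\le w_1(u)$ and bounded degree of $G$ make this routine; the real content of the argument already lies in Lemma~\ref{MFMC-lemma}.
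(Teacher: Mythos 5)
Your proof is correct, and its skeleton is the same as the paper's: exhaust $V$ by finite sets, apply Lemma~\ref{MFMC-lemma} on each finite subnetwork, pass to a limit, and use the global conservation law~\eqref{global-cons-bd} at the end to recover the $w_2$-identity. Where you genuinely differ is the limiting mechanism. The paper extracts a subsequence that is Cauchy in $\ell^1$ (using absolute summability of $w_1$ to control the tail outside a large finite set), so the limit is again a flow on the full infinite network with $f(s,u)=w_1(u)$; the sink-side identity then follows by comparing $\val f$ with $\sum_v w_2(v)$ and the capacity bound $f(v,t)\le w_2(v)$. You instead take only a pointwise limit along the countable edge set (justified by the uniform bound $f_k(u,v)\le w_1(u)$ and a diagonal argument), recover the source-side identity by passing to the limit in the finite sums at each vertex (bounded degree), get only an inequality on the sink side, and upgrade it to an equality by summing over $v$ (Tonelli for nonnegative terms) and invoking~\eqref{global-cons-bd}. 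Your route is more elementary, avoiding the $\ell^1$ Cauchy/tail estimate entirely, at the price of needing $\sum_v w_2(v)<\infty$ in the final step to pass from equality of totals to pointwise equality; this is harmless, since the paper's own argument already relies on absolute summability of the weights (implicit in the lemma and automatic in the application, where the weights are pieces of the $L^2$ mass).
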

\begin{proof}
  Let $A_k\subset V$ be an increasing sequence of sets, 
  $A_1\subset A_2\subset\cdots$, such that for every $v\in V$ there is 
  some $k$ such that $v\in A_k$.  Let $U_A$ be the network described above.
  For each $k$, Lemma~\ref{MFMC-lemma} provides some flow $f_k$ such that 
  $f_k\leq c$ and $f_k(s,u) = w_1(u)$ when $u\in V$.  We show that, up to 
  subsequence, $f_k$ converges to a flow $f$ on $U_A$ in $\ell^1$.  This is 
  done by a compactness argument.
  
  Indeed, let $\eps>0$.  We will prove the existence of a subsequence 
  $k(1),k(2),\cdots$ such that 
  \begin{equation}
    \sum_{(u,u')\in U\times U} |f_{k(i)} - f_{k(j)}| < \eps
    \label{subseq-eps-bd}
  \end{equation}
  for all $i,j\geq 0$.  First choose $K$ so large that 
  \[
    \sum_{u\in V\setminus A_K} w_1(u) \leq \eps,
  \]
  which is possible since $w_1$ is absolutely summable and $A_k$ is an 
  increasing sequence which converges to $V$.  

  There are finitely many pairs $(u,u')\in U_{A_K}\times U_{A_K}$, so by
  compactness we can find a sequence $k(1),k(2),\cdots)$ such that 
  \begin{equation}
    \sum_{(u,u')\in U_{A_K}\times U_{A_K}}
    |f_{k(i)}(u,u') - f_{k(j)}(u,u')| \leq \eps
    \label{finite-compact-bd}
  \end{equation}
  for all $i,j$.  Now we write
  \begin{align*}
    \sum_{(u,u')\in U\times U} |f_{k(i)}(u,u') - f_{k(j)}(u,u')| =
    &\sum_{u\not\in A_K} |f_{k(i)}(s,u) - f_{k(j)}(s,u)| 
    \\+&\sum_{\substack{u\not\in A_K \\ v\in V_2}} 
    |f_{k(i)}(u,v) - f_{k(i)}(u,v)| 
    \\+&\sum_{v\not\in N^+(A_K)} |f_{k(i)}(v,t)-f_{k(i)}(v,t)|
    \\+&\sum_{\substack{u\in A_K \\ v\in V_2}}
    |f_{k(i)}(u,v)-f_{k(j)}(u,v)|.
  \end{align*}
  We bound each term separately.  Because of~\eqref{finite-compact-bd}
  the last term is bounded by $\eps$.  The first term is bounded by $2\eps$
  due to the triangle inequality, the fact that $f\leq c$,
  and the definition of $A_K$.  The next two terms are actually identical
  to the first, once you apply the triangle inequality and the Kirchoff's 
  laws for $f_{k(i)}$ and $f_{k(j)}$.  We have therefore 
  shown~\eqref{subseq-eps-bd}.  
  
  A standard diagonalization argument now
  shows the existence of a subsequence, which we will not relabel explicitly,
  which is Cauchy in $\ell^1$.  More precisely, this means for every
  $\eps>0$ there exists $M$ such that when $n,m>M$,
  \[
    \sum_{(u,u')\in U\times U} |f_n(u,u')-f_m(u,u')| < \eps.
  \]
  By the completeness of $\ell^1$, this
  converges in $\ell^1$ to some function $f:U\times U\to\Real^+$.  This 
  convergence is strong enough to ensure that $f$ is still a flow, and 
  moreover $f(s,u) = w_1(u)$ for all $u\in V$.  This, combined with the 
  Kirchoff's laws, implies that 
  \[
    w_1(u) = \sum_{v\in N^+(u)} f(u,v)
  \]
  for all $u\in V$.  Moreover, Kirchoff's laws and the global
  conservation law~\eqref{global-cons-bd} ensure
  \[
    \sum_{u\in V} w_1(u) = \val f = \sum_{v\in V_2} f(v,t)
    \leq \sum_{v\in V} w_2(v) = \sum_{u\in V} w_1(v).
  \]
  Since $f(v,t)\leq w_2(v)$ pointwise, this equality enforces $f(v,t)=w_2(v)$
  for all $v\in V$.  We have therefore shown that $f$ has the desired 
  properties.

\end{proof}

\subsection{The case $N>2$}
This short section just shows how to iterate the $N=2$ case to complete
the proof of Proposition~\ref{discrete-decomp-propn}.

\begin{proof}[Proof of Proposition~\ref{discrete-decomp-propn}]
  We start with a simple observation. By~\eqref{local-cons-bd}, 
  since $N^+(V) \subset V$,
  \[
    \sum_{v\in V} w_1(v) \leq \sum_{v\in V} w_2(v)
    \leq \cdots \leq \sum_{v\in V} w_N(v).
  \]
  But by~\eqref{global-cons-bd} all of these inequalities are actually 
  equalities.  

  Now we consider the graph on the vertices
  $V_{[N]} := V\times [N]$, where $(u,i)\in V_{[N]}$ is connected to 
  $(v,i+1)\in V_{[N]}$ if $v\in N^+(u)$.  
  Each edge can be uniquely assigned a triple
  $(i,u,v)$ where $1\le i<N$ and $v\in N^+(u)$.  Denote by $E_{[N-1]}$ the 
  set of such triples.  We write
  $V_{[N]} = V_1\cup V_2\cup\cdots\cup V_N$ where $V_i = V\times\{i\}$.
  Similarly we write
  $E_{[N-1]} = E_1\cup E_2\cup\cdots\cup E_{N-1}$, where each $E_i$
  has the first coordinate of its triple equal to $i$.
  We would like to assign a weighting function $f:E_{[N-1]}\to \Real^+$ 
  to each of these edges such that for each $1\le i<N$ and $v\in V$,
  \begin{equation}
    w_i(u) = \sum_{v\in N^+(u)} f(i,u,v),
    \label{sum-out-eq}
  \end{equation}
  and for each $1<i\le N$,
  \begin{equation}
    w_i(v) = \sum_{u \in N^-(v)} f(i-1,u,v).
    \label{sum-in-eq}
  \end{equation}
  This can be done by finding, for each $1\le j<N$, a flow $f_j:E_i\to\Real^+$
  such that~\eqref{sum-out-eq} is satisfied for $i=j$ and~\eqref{sum-in-eq}
  is satisfied for $i=j+1$. But the existence of $f_j$ is exactly what is
  guaranteed by Lemma~\ref{one-layer-lemma}.  Now simply define $f$ on 
  $E_{[N-1]}$ by $\left.f\right|_{E_j} = f_j$.  
  
  Finally we use $f$ to define a weight on each path.  We approach this with
  a probabilistic interpretation.  Divide all weights $w_i$ and the flow $f$
  by the normalizing factor $\sum_{v\in V} w_1(v)$ to obtain new weights
  $\tilde{w}_i$ and a new flow $\tilde{f}$ which still obeys~\eqref{sum-out-eq}
  and~\eqref{sum-in-eq}.  Now consider the following random process.  I let
  $\nu_1\in V$ be a random variable with probability distribution given by 
  $\tilde{w}_1$.  Given $\nu_k\in V$, I independently choose a 
  $\nu_{k+1}\in N(\nu_k)$
  with probability distribution given by
  \[
    \Prob(\nu_{k+1} = v | \nu_k) = f(k, \nu_k, v) / w_k(\nu_k).
  \]
  Notice that for this probability to make sense we would need
  \[
    1 = \Prob(\nu_{k+1}\in N(\nu_k) | \nu_k)
    = \frac{1}{w_k(\nu_k)}\sum_{v\in N(\nu_k)} f(k,\nu_k,v),
  \]
  which is given by~\eqref{sum-out-eq}.  Once 
  $\nu_i$ is chosen for all $i\in [N]$, we have a randomly constructed path 
  $\rho\in\mathcal{P}_N$.  Let $\tilde{\alpha}(p) := \Prob(\rho=p)$ be the 
  probability distribution of $p$.  

  Now observe that for any $v\in V$, by the construction of $\rho$ and
  $\tilde{\alpha}$,
  \[
    \tilde{w}_1(v) = \Prob(\nu_1 = v) = \Prob(\rho(1) = v) = 
    \sum_{p(1)=v}\tilde{\alpha}(p),
  \]
  which up to the normalizing factor of $\sum_{v\in V} w_1(v)$ 
  establishes~\eqref{discrete-decomp-eq} for $i=1$.  
  For the sake of induction, suppose~\eqref{discrete-decomp-eq} is true for
  some $1\le i<N$.  Then for any $v\in V$,
  \begin{align*}
    \Prob(\nu_{i+1} = v) &= 
    \sum_{u\in N^-(v)}\Prob(\nu_{i}=u\text{ and }\nu_{i+1}=v) \\&= 
    \sum_{u\in N^-(v)}\Prob(\nu_i=u)\Prob(\nu_{i+1}=v | \nu_i=u)  \\&=
    \sum_{u\in N^-(v)}\tilde{w}_i(u) (\tilde{f}(i,u,v) / \tilde{w}_i(u)) 
    \\&= \sum_{u\in N(v)} \tilde{f}(i,u,v) = \tilde{w}_{i+1}(v)
  \end{align*}
  by~\eqref{sum-in-eq}.  But of course we can also write
  \[
    \Prob(\nu_{i+1} = v) = \Prob(\rho(i+1) = v)
    = \sum_{p(i+1)=v} \tilde{\alpha}(p).
  \]
  Thus by induction, and undoing the normalization, we have constructed our 
  desired $\alpha$.  
\end{proof} 

\section{The Skinny Lipschitz Tube Decomposition}
\label{decomp-section}
In this section we demonstrate how to combine the bounds from 
Section~\ref{ests-section} and the discrete problem from 
Section~\ref{discrete-decomp-section} to prove Theorem~\ref{tube-decomp-thm}.

\begin{proof}[Proof of Theorem~\ref{tube-decomp-thm}]
Let $u_t$ be a solution to the Schrodinger equation with frequency
localization~\eqref{freq-local-eq}.  Let $\mu$ be given by 
Proposition~\ref{LC-propn}, $H$ defined as in~\eqref{H-defn-eq}, 
and $\tau>0$ given by Proposition~\ref{FS-propn}.
We will discretize to the lattice 
$\Integer^d\times(\tau\Integer)\subset\Real^d\times\Real$; for convenience we
will reserve the letters $a,b$ for general elements of $\Integer^d$,
$h$ will always denote an element of $H$, and $x,y$ will always be used
for generic elements of $\Real^d$.  

We define for $n\in\Integer$ the weight function 
$m:\Integer^d\times (\tau\Integer)\to\Real^+$ by
\[
  m(a, n\tau) = \int |u_{n\tau}|^2 \mu_{a+H}.
\]
First observe that $m$ satisfies a conservation law since the translates 
of $\mu$ form a partition of unity:
\begin{equation}
  \sum_{a\in\Integer^d} m(a,n\tau) 
  = \int |u_{n\tau}|^2 \sum_{a\in\Integer^d} \mu_{a+H}
  = |H| \int |u_{n\tau}|^2 \mu_{\Integer^d} = |H| \int |u_{n\tau}|^2
  = |H| \|u_0\|_{L^2}^2.
  \label{cons-m-eq}
\end{equation}
Now we would like to show that $m$ also inherits a finite speed 
property from~\eqref{FS-bd}.  
Let $A\subset \Integer^d$ and let $n\in\Integer$.  We would like to show that 
\begin{equation}
  \sum_{a\in A} m(a,n\tau) \leq \sum_{b\in A+H} m(b, (n+1)\tau).
  \label{discrete-FS-bd}
\end{equation}
Expand the LHS using the definition of $m$:
\begin{align*}
  \sum_{a\in A} m(a, n\tau) 
  &= \int |u_{n\tau}|^2 \sum_{a\in A} \mu_{a+H} \\
  &= \int |u_{n\tau}|^2 \sum_{a\in A} \sum_{h\in H} \mu_{a+h}.
\end{align*}
A particular $a'\in A+H$ may contribute several times in the above integral.
Let 
\[
  E_{\ge i} = \{a'\in A+H; |A\cap (a'-H)| \ge i\},
\]
so that in other words $a'\in E_{\ge i}$ means there exists at least 
$i$ distinct choices of $h\in H$ such that $a'-h\in A$.  
Then we can write the above sum a different way, obtaining
\[
  \sum_{a\in A} m(a, n\tau) = 
  \sum_{i=1}^{|H|} \int |u_{n\tau}|^2 \mu_{E_{\ge i}}
\]
Now apply the finite speed bound~\eqref{FS-bd},
\[
  \sum_{a\in A} m(a, n\tau) \le
  \sum_{i=1}^{|H|} \int |u_{(n+1)\tau}|^2 \mu_{E_{\ge i}+H}.
\]
Similarly, if we define
\[
  F_{\ge i} = \{b'\in A+2H; |(A+H)\cap (b'-H)| \geq i\},
\]
then we can write
\[
  \sum_{b\in A+H} m(a, (n+1)\tau) =
  \sum_{i=1}^{|H|} \int |u_{(n+1)\tau}|^2 \mu_{F_{\ge i}}.
\]
Thus, to prove~\eqref{discrete-FS-bd} it would suffice to show
$E_{\ge i} + H \subset F_{\ge i}$.  Indeed, if 
$b\in E_{\ge i} + H$ then $b = a' + h$ for some $a'\in E_{\ge i}$ and 
$h\in H$, so by the definition of $E_{\ge i}$,
\begin{align*}
  i&\leq |\{(a,h')\in A\times H; \,\,a' = a + h'\}| \\
  &= |\{(a,h')\in A\times H; \,\,b = (a+h) + h'\}| \\
  &\leq |\{(c,h')\in (A+H)\times H; \,\,b = c+h'\}|
\end{align*}
This last inequality implies $b\in F_{\ge i}$.  We have therefore
shown~\eqref{discrete-FS-bd}.

We are nearing a situation where we can apply 
Proposition~\ref{discrete-decomp-propn}.  Let $R>0$, and let 
$N = \lceil 2R/\tau\rceil$ (rounded to the next even number for convenience).  
We consider the graph on $G = (\Integer^d, E)$
on $\Integer^d$ where $(a,b)\in E$ if and only if $a\in b+H$.  Define the
sequence of weights $w_i:\Integer^d\to\Real^+$ for $1\le i\le N$ by
\[
  w_i(a) = m(a,(i-N/2)\tau).
\]
Now~\eqref{discrete-FS-bd} implies~\eqref{local-cons-bd}, and the conservation
of mass from~\eqref{cons-m-eq} implies~\eqref{global-cons-bd}.  Thus 
by Proposition~\ref{discrete-decomp-propn} there 
is a weight $\alpha:\mathcal{P}_N\to\Real^+$ on the set of paths of length
$N$ on $G$ such that 
\[
  m(a, (i-N/2)\tau) = \sum_{p(i)=a} \alpha(p).
\]
Let $p$ be one such path.  Choose $r=10d$, and let $\gamma_p$ be the piecewise
linear path with $\gamma_p( (i-N/2)\tau) = p(i)$.  By taking $V=C_d/\tau$,
we obtain the velocity bound $|\nabla \gamma_p|\leq V$.
Consider the tube decomposition function
\[
  f(x,t) = \sum_{p\in\mathcal{P}_N} \alpha(p) T_{\gamma_p, r}(x,t).
\]
Now we verify that $f(x,t)$ is a pointwise upper bound for $|u_t(x)|^2$.  
Let $a\in \Integer^d$ and $1\le i\le N$.  Suppose that 
$(x,t)\in\Real^d\times\Real$ is 
contained in the prism with bottom-left corner $(a, (i-N/2)\tau)$, meaning that
\[
  a_j\leq x_j < a_j+1, \quad (i-N/2) \tau \leq t<(i-N/2+1)\tau
\]
for $1\le j\le d$. Now let $p\in\mathcal{P}_N$ be some path
with $p(i) = a$.  Then because we chose $r$ large enough,
and because $\gamma_p( (i-N/2) ) = p(i) = a$, it follows that 
$(x,t)\in T_{\gamma_p,r}$.  Using this observation we can make the bound
\[
  f(x,t) = \sum_{p\in\mathcal{P}_N}\alpha(p) T_{\gamma_p,r}(x,t)
  \geq \sum_{p(i) = a} \alpha(p)  = m(a, (i-N/2)\tau).  
\]
Now by~\eqref{LC-bd} and~\eqref{FS-bd}, there exists some
absolute constant $C$ such that 
\begin{equation}
  |u_t(x)|^2 \leq C\int |u_t|^2 \mu_a \leq C\int |u_{(i-N/2)\tau}|^2\mu_{a+H}
  = Cm(a,n\tau). 
  \label{m-dominated}
\end{equation}
Thus for every $(x,t)$ with $|t|<R$, 
\[
  |u_t(x)|^2 \leq C\sum_{p\in\mathcal{P}_N} \alpha(p) T_{\gamma_p,r}(x,t),
\]
which is exactly (with slightly different letters) the bound~\eqref{ptwise-bd}.

Now we verify~\eqref{L1-efficient-bd}.  Indeed, by~\eqref{cons-m-eq},
\[
  \sum_{p\in\mathcal{P}_N} (10d)^d \alpha(p)
  \leq C \sum_{a\in\Integer^d} w_1(a) = C|H|\|u_0\|_{L^2}^2.
\]
\end{proof}

\section{Applications to bilinear and multilinear estimates}
\label{applications-section}
In this section we prove the bilinear Strichartz estimate, 
Theorem~\ref{bi-Strich-thm} and the multilinear restriction theorem,
Theorem~\ref{multi-rest-thm} using the tube decomposition.  Neither result is
new, but I think that these proofs illustrate a few interesting points. 
Perhaps one of the main lessons is that neither the bilinear Strichartz
nor the multilinear restriction estimates are
truly \emph{dispersive}.  Indeed observe that the decomposition
described in Theorem~\ref{tube-decomp-thm} does not
rule out the possibility that there is only one tube.  Thus by itself it
cannot imply any dispersive estimates.

\subsection{Proof of Bilinear Strichartz}
\label{bilinear-section}
In this part we prove Theorem~\ref{bi-Strich-thm} using
the decomposition given by Theorem~\ref{tube-decomp-thm}.  I believe this
proof illustrates something intuitive about the estimate -- roughly speaking
the intuition is that a fast and a slow particle cannot be in the same place
for very long.  This intuition is expressed in the proof by observing that 
a ``fast'' and ``slow'' tube must have a small region of intersection.

Recall the definition of the annulus,
\[
  A_N := \{\xi;\, N/2\leq |\xi|\leq 2N\}.
\]
We define in addition the annulus
\[
  A^*_N := \{\xi;\, N/4 \leq |\xi|\leq 4N\},
\]
\begin{proof}[Proof of Theorem~\ref{bi-Strich-thm}]
  Let $u_0, v_0 \in L^2(\Real^d)$ have frequency localizations
  $\supp \hat{u_0} \subset A_N$ and $\supp \hat{v_0} \subset A_M$ 
  with $M\ll N$.  
  
  Let $V$ be the constant appearing in Theorem~\ref{tube-decomp-thm}
  (which is the same constant appearing in Corollary~\ref{scaled-decomp-cor}),
  and let $\{\xi_i\}_{i=1}^{(100V)^d} \subset A_1$ be a set of points such that 
  the balls with radius $1/(10V)$ 
  centered at $\{\xi_i\}$ cover $A_1$ and are contained in
  $A^*_1$:
  \[
    A_1 \subset \bigcup_{i=1}^{(100V)^d} B_{1/10V}(\xi_i) \subset A^*_1.
  \]
  Using a partition of unity on (a scaled version of) this 
  covering, write the decompositions 
  \[
    u_0 = \sum_{i=1}^{(100V)^d} u_i, \quad v_0 = \sum_{i=1}^{(100V)^d} v_i
  \]
  such that $\supp \hat{u_i} \subset B_{N/(10V)}(N\xi_i)$ and 
  $\supp \hat{v_i} \subset B_{M/(10V)}(M\xi_i)$.  We can also demand that 
  these decompositions are nearly orthogonal in the sense that 
  \[
    \sum_{i=1}^{(100V)^d} \|u_i\|_{L^2_x}^2 \leq 2 \|u\|_{L^2}^2
  \]
  and likewise with $v_i$.  Using this decomposition and the 
  Cauchy-Schwartz inequality, we obtain
  \[
    \|uv\|_{L^2_{t,x}(\Real^d\times\Real)}
    \leq (100V)^{2d} \sum_{i=1}^{(100V)^d}\sum_{j=1}^{(100V)^d}
    \|u_iv_j\|_{L^2_{t,x}(\Real^d\times\Real)}.
  \]
  Now we bound these cross-terms $\|u_i v_j\|_{L^2_{t,x}(\Real^d\times\Real)}$.
  From Corollary~\ref{scaled-decomp-cor} we obtain paths 
  $\{\gamma_n\}_{n=1}^\infty$ with weights $\{w_n\}_{n=1}^\infty$ such that
  \[
    |u_i(x,t)|^2 \leq \sum_{n=1}^\infty w_n T_{\gamma_n,rN^{-1}}(x,t).
  \]
  For convenience we will write $T_n$ instead of $T_{\gamma_n,rN^{-1}}$.  
  We also use Corollary~\ref{scaled-decomp-cor} on $v_j$ to obtain 
  paths $\{\gamma'_m\}_{m=1}^\infty$ with weights $\{w'_m\}_{m=1}^\infty$
  such that
  \[
    |v_j(x,t)|^2 \leq \sum_{m=1}^\infty w'_m T'_m(x,t)
  \]
  where $T'_m(x,t) = T_{\gamma'_m, rM^{-1}}$.  Applying these pointwise bounds,
  we estimate
  \begin{align}
    \begin{split}
    \int_{\Real}\int_{\Real^d} |u_i(x,t)|^2|v_j(x,t)|^2 \,dx\,dt
    \leq \sum_{m=1}^\infty \sum_{n=1}^\infty w_n w'_m
    \int_{\Real}\int_{\Real^d} T_n(x,t) T'_m(x,t)\,dx\,dt.
    \label{ptwise-decomp-bd}
    \end{split}
  \end{align}
  Now the paths $\gamma_n$ and $\gamma'_m$ defining $T_n$ and $T_m$ have 
  velocities satisfying 
  \[
    |\nabla \gamma_n - N\xi_i| \leq N/10
  \]
  and 
  \[
    |\nabla \gamma'_m - M\xi_j| \leq M/10.
  \]
  Since $\xi_i,\xi_j\in A_1$, we can conclude that 
  $\nabla \gamma_n\subset A^*_N$ and $\nabla \gamma'_m\subset A^*_M$.  Thus
  the tubes $T_n$ and $T'_m$ can meet at most once, in a region of spatial 
  width $N^{-1}$ and only for a duration of $(MN)^{-1}$.  That is,
  \[
    \int_{\Real}\int_{\Real^d} T_n(x,t) T'_m(x,t)\,dx\,dt
    \leq C N^{-1-d} M^{-1}.
  \]
  Plugging this back into~\eqref{ptwise-decomp-bd} and then 
  applying~\eqref{scaled-L1-bd}, 
  \begin{align*}
    \|u_iv_j\|_{L^2_{t,x}(\Real^d\times\Real)}^2 
    &\leq C \sum_{m=1}^\infty \sum_{n=1}^\infty N^{-1-d} M^{-1} w_n w'_m \\
    &\leq C N^{-1} M^{d-1} (\sum_{m=1}^\infty M^{-d} w'_m)
    (\sum_{n=1}^\infty N^{-d} w_n) \\
    &\leq C N^{-1} M^{d-1} \|u_i\|_{L^2_x}^2 \|v_j\|_{L^2_x}^2.
  \end{align*}
  Finally applying the almost orthogonality of the decomposition allows us 
  to conclude
  \[
    \sum_{i=1}^{(100V)^d} \sum_{j=1}^{(100V)^d}
    \|u_i v_j\|_{L^2_{t,x}(\Real^d\times\Real)}^2
    \leq C N^{-1} M^{d-1} 
    (\sum_{i=1}^{(100V)^d} \|u_i\|_{L^2_x}^2)
    (\sum_{j=1}^{(100V)^d} \|v_j\|_{L^2_x}^2)
    \leq 2C N^{-1} M^{d-1} \|u\|_{L^2_x}^2 \|v\|_{L^2_x}^2.
  \]
  The theorem is finished by taking square roots.
\end{proof}

\subsection{Multilinear restriction}
We can also prove the multilinear restriction estimate using the multilinear
Kakeya problem for Lipschitz tubes, proved by Guth in~\cite{guth2015short}.  
First let us set up some notation.  Let $\{v_i\}_{i=1}^n\subset\Real^n$ be 
a collection of unit vectors.  We say that $\{v_i\}$ is $\nu$-transverse
if $|v_1\wedge v_2\wedge\cdots\wedge v_n| \ge \nu$.
Let $\Gamma_i = \{\gamma_{i,j}\}_{j=1}^{N_j}$ be a collection of Lipschitz
curves.  Given $\delta>0$, we say that $\Gamma_i$ is $\delta$-close to
parallel with $v_i$ if for any tangent vector $\vec{t}$ of a curve 
$\gamma_{i,j}$, $|\vec{t} - v_j| < \delta$.  Let $T_{i,j}$ denote the tube
of unit radius centered along the curve $\gamma_{i,j}$.

\begin{thm}[Multilinear Kakeya for Lipschitz tubes~\cite{guth2015short}]
  For every $\eps>0$ and $\nu>0$, there is a $C_{\eps,\nu}>0$ and a 
  $\delta$ such that the folllowing
  holds:  If $\{v_i\}\subset\Real^n$ is a set of unit vectors that is
  $\nu$-transverse, and each $\Gamma_i = \{\gamma_{i,j}\}_{j=1}^{N_i}$ 
  is a set of Lipschitz paths which are $\delta$-close to $v_i$, and 
  $w_{i,j}$ is any positive collection of weights, then
  \begin{equation}
    \int_{B_R} \prod_{i=1}^n
    \left(\sum_{j=1}^{N_j} w_{i,j} T_{i,j}\right)^{\frac{1}{n-1}}
    \leq
    C_\eps R^\eps \prod_{i=1}^n \left(
    \sum_{j=1}^{N_i} w_{i,j}\right)^{\frac{1}{n-1}}.
    \label{mult-kakeya-bd}
  \end{equation}
  \label{mult-kakeya-thm}
\end{thm}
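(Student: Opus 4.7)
The plan is to handle the straight-tube case by the affine Loomis-Whitney inequality and then promote it to the Lipschitz case by an induction on scales in which the parameter $\delta$ controls the loss per scale doubling.

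For straight tubes, where each $\gamma_{i,j}$ is a line parallel to $v_i$, the indicator $T_{i,j}(x)$ depends only on the projection $\pi_i(x)$ onto $v_i^\perp$, and the function $F_i(y) := \sum_j w_{i,j}\,\mathbf{1}_{\pi_i(T_{i,j})}(y)$ on $v_i^\perp$ satisfies $\|F_i\|_{L^1} \approx \sum_j w_{i,j}$ because each tube has unit radius. The affine Loomis-Whitney inequality, whose constant depends only on the transversality parameter $\nu$, then yields
\[
\int_{\Real^n} \prod_{i=1}^n F_i(\pi_i(x))^{1/(n-1)}\,dx \leq C_\nu \prod_{i=1}^n \left(\sum_j w_{i,j}\right)^{1/(n-1)},
\]
which is \eqref{mult-kakeya-bd} on all of $\Real^n$ with no $R^\eps$ factor.

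For the Lipschitz case, let $G(R)$ denote the best constant in \eqref{mult-kakeya-bd} (without the $R^\eps$) on $B_R$, taken over all admissible configurations. I would prove a recursive bound of the form $G(2R) \leq (1 + C_\nu \delta)\, G(R)$ for $R \geq 1$; iterating over $\log_2 R$ scales gives $G(R) \leq G(1) \cdot R^{\log_2(1 + C_\nu \delta)} \leq C_{\eps,\nu}\, R^\eps$ once $\delta$ is chosen smaller than a constant depending on $\eps$ and $\nu$. The base case $G(1) \leq C_\nu$ follows from the straight-tube inequality applied on $B_1$, since each Lipschitz tube there sits inside a straight tube of enlarged radius $1 + O(\delta)$. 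For the inductive step, I would cover $B_{2R}$ by a bounded-overlap family of balls of radius $R$; on any such ball each $\gamma_{i,j}$ is contained in a straight tube parallel to its local tangent with enlargement $1 + O(\delta R)$, and the $\nu$-transversality of the local tangent directions degrades only by $O(\delta)$.

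The main obstacle is making the per-scale loss exactly $1 + O(\delta)$ rather than a constant strictly greater than $1$: a naive covering loses a factor on the order of $2^n$ per doubling and produces only the trivial bound $G(R) \leq R^n$. The sharp recursion requires weighting each tube's contribution by the fractional length it spends in each cover ball, so that the $L^1$ masses on the right-hand side of \eqref{mult-kakeya-bd} reassemble faithfully from the cover with only the modest multiplicative degradation from curvature and perturbed transversality surviving. This delicate bookkeeping is where both the Lipschitz hypothesis — bounding how much the tubes curve at scale $R$ — and the $\nu$-transversality — guaranteeing stability of the Loomis-Whitney constant under $O(\delta)$-perturbations of the projection directions — are used in an essential way.
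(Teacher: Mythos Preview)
The paper does not prove this theorem; it is quoted from \cite{guth2015short} and used as a black box in the derivation of Theorem~\ref{multi-rest-thm}, so there is no proof in the paper to compare against. Your outline---affine Loomis--Whitney for the straight case plus an induction on scales---is indeed the shape of Guth's argument, but the specific recursion you write down has a real gap.

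The inequality $G(2R)\le (1+C_\nu\delta)\,G(R)$ cannot be obtained by the covering-and-reweighting you describe. On a sub-ball $B$ of radius $R$ the inductive hypothesis must be applied with the \emph{full} weights $w_{i,j}$ of the tubes that meet $B$: the integrand on $B$ involves $\sum_j w_{i,j}\chi_{T_{i,j}}$, and replacing $w_{i,j}$ by a fractional-length weight would only decrease that sum, so the ``reassembly'' you allude to never gets started. One is then left to control $\sum_B \prod_i\bigl(\sum_{j:\,T_{i,j}\cap B\ne\emptyset} w_{i,j}\bigr)^{1/(n-1)}$, and here the exponents sum to $n/(n-1)>1$, so there is no lossless H\"older step; the best one can extract is $G(2R)\le K\,G(R)$ with $K$ the maximal number of sub-balls a single tube meets, which gives only a polynomial bound in $R$. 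Guth's actual argument tiles $B_R$ into cubes of side $R^{1/2}$: on each such cube the Lipschitz tubes sit inside slightly fattened straight tubes, so Loomis--Whitney applies there directly, and the resulting sum over cubes is itself---after fattening the tubes to radius $R^{1/2}$ and rescaling---a multilinear Kakeya problem at scale $R^{1/2}$. This yields $G(R)\le C\,G(R^{1/2})$ and hence $G(R)\le C^{O(\log\log R)}\le C_\eps R^\eps$. The square-root intermediate scale, not halving, is what makes the exponents balance; the ``delicate bookkeeping'' you defer is not bookkeeping but the whole mechanism of the proof.
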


This theorem implies a version of multilinear restriction for solutions
to the Schr{\"o}dinger equation which are highly localized in frequency.
\begin{lemma}
  For every $\eps>0$ there exists a $C_\eps$ and a $\delta > 0$ such 
  that the following holds: 
  Let $r=(10d)^{-1}$ and suppose
  $\{\xi_i\}_{i=0}^d$ is a set of vectors with $\xi_0\in B_r(0)$ 
  and $\xi_i \in B_r(e_i)$ for $1\le i\le d$.  If also $u_i$ are solutions
  to the Schrodinger equation with $\supp \hat{u}_i \in B_\delta(\xi_i)$,
  then 
  \[
    \int_{|x|<R} \int_{|t|<R} 
    \prod_{i=0}^d |u_i(t,x)|^{2/d}\,dx\,dt
    \leq C_\eps R^\eps \prod_{i=0}^d \|u_i\|_{L^2_x}^{2/d}
  \]
  for all $R>0$.
\end{lemma}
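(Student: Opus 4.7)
The plan is to bound each $|u_i|^{2/d}$ pointwise by a sum of weighted spacetime Lipschitz tubes using Corollary~\ref{scaled-decomp-cor}, and then apply the multilinear Kakeya inequality of Theorem~\ref{mult-kakeya-thm} in $\Real^{d+1}$ with $n=d+1$ and the matching exponent $1/(n-1)=1/d$. The identification is natural because a Schr{\"o}dinger wave whose frequency concentrates at $\xi_i$ produces tubes whose spacetime tangent vectors cluster near $(\xi_i,1)$, and the hypothesis on the $\xi_i$ places $\{(\xi_i,1)\}_{i=0}^d$ in a small neighborhood of the linearly independent configuration $(0,1),(e_1,1),\dots,(e_d,1)$.

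First, I would apply Corollary~\ref{scaled-decomp-cor} to each $u_i$ with $\rho=\delta$ and average momentum $\xi_i$, obtaining paths $\{\gamma_{i,j}\}_j$ with $|\nabla\gamma_{i,j}-\xi_i|<V\delta$, weights $w_{i,j}\ge 0$, and spacetime tubes $T_{i,j}:=T_{\gamma_{i,j},\,r/\delta}$ satisfying
\[
    |u_i(t,x)|^2 \le \sum_j w_{i,j}\, T_{i,j}(x,t), \qquad \sum_j (r/\delta)^d w_{i,j} \le C\|u_i\|_{L^2_x}^2.
\]
Raising the pointwise bound to the $1/d$ and multiplying over $i$ controls $\prod_i|u_i|^{2/d}$ by a product of tube sums. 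Viewed in $\Real^{d+1}$, the tangent vectors of these tubes lie within $O(V\delta)$ of the normalization of $(\xi_i,1)$; since the $(d{+}1)\times(d{+}1)$ matrix with rows $(0,1),(e_1,1),\dots,(e_d,1)$ has determinant $\pm 1$, the normalized targets are $\nu_d$-transverse for some $\nu_d \ge 2^{-d/2}$ depending only on $d$, and the perturbation $|\xi_i-e_i|\le r=(10d)^{-1}$ preserves transversality $\nu\ge\nu_d/2$ provided $V\delta$ is small. Given $\eps>0$, let $\delta_K$ be the parameter furnished by Theorem~\ref{mult-kakeya-thm} for transversality $\nu_d/2$, and fix $\delta<\delta_K/(2V)$ so that each family of tube tangent directions is $\delta_K$-close to parallel with its target.

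Next I would rescale spacetime isotropically by $(y,s):=(\delta/r)(x,t)$, which sends each $T_{i,j}$ to a unit-radius tube $\tilde T_{i,j}$ with the same tangent direction, maps $\{|x|,|t|<R\}$ into a ball of radius $R':=\sqrt{d+1}\,\delta R/r$, and introduces a Jacobian $(r/\delta)^{d+1}$. Applying~\eqref{mult-kakeya-bd} to the $\tilde T_{i,j}$, multiplying by the Jacobian, and invoking $\sum_j w_{i,j} \le C\delta^d r^{-d}\|u_i\|_{L^2_x}^2$ from the efficiency bound yields
\[
    \int_{|x|<R}\!\int_{|t|<R} \prod_{i=0}^d|u_i|^{2/d}\,dx\,dt \le C_\eps (r/\delta)^{d+1} (R')^\eps \prod_{i=0}^d\bigl(C\delta^d r^{-d}\|u_i\|_{L^2_x}^2\bigr)^{1/d}.
\]
A direct calculation shows the $(r/\delta)^{d+1}$ Jacobian exactly cancels the $d{+}1$ factors of $\delta$ (and inverse powers of $r$) produced by the $L^1$-efficiency bound, and $(R')^\eps \le C_\eps R^\eps$, delivering the claimed $C'_\eps R^\eps\prod_i\|u_i\|_{L^2_x}^{2/d}$.

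The main obstacle is essentially parameter bookkeeping: the transversality hypothesis of Kakeya demands $\delta$ be small, while one must separately verify that the powers of $\delta$ accumulated from the Jacobian, the $L^1$-efficiency bound, and Kakeya's $R^\eps$ loss all balance. These pull in only superficially opposite directions — the $\delta$-powers cancel for \emph{any} admissible $\delta$ because of the scaling structure of the problem — so the balance is automatic once the transversality is secured. The only nontrivial nonalgebraic input beyond Corollary~\ref{scaled-decomp-cor} is the identification of the spacetime group-velocity vectors $(\xi_i,1)$ as a $\nu_d$-transverse configuration, which is immediate from the elementary determinant computation above.
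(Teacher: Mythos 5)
Your proposal is correct and follows essentially the same route as the paper: apply Corollary~\ref{scaled-decomp-cor} to each $u_i$, verify that the spacetime directions determined by the $\xi_i$ form a transverse configuration, and invoke the Lipschitz-tube multilinear Kakeya estimate of Theorem~\ref{mult-kakeya-thm} with $n=d+1$, choosing $\delta$ small relative to the Kakeya closeness parameter divided by $V$. The only (harmless) difference is bookkeeping: you reduce the radius-$r\delta^{-1}$ tubes to unit tubes by an isotropic spacetime rescaling whose Jacobian cancels exactly against the $L^1$-efficiency bound, whereas the paper splits the wide tubes into unit pieces and absorbs the resulting loss into $C_\eps R^\eps$.
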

\begin{proof}
  Consider the vectors $v'_i = (1, -2\xi'_i) \in \Real^{d+1}$, and let
  $v_i = v'_i / |v'_i|$.  The vectors $\{v_i\}$ correspond to the unit normals
  to the paraboloid $P = \{\tau = |\xi|^2\} \subset \Real^{d+1}$ at the 
  points $(|\xi_i|^2, \xi_i)$.  Because of the constraint on the positions
  of $\xi_i$, we can guarantee that the set $\{v_i\}$ is $\nu$-transverse
  for some absolute constant $\nu>0$.  
  
  Let $\eps>0$, and let $C_\eps$ and $\tilde{\delta}$ be given
  according to Theorem~\ref{mult-kakeya-thm} with this constant $\nu$
  and with $n=d+1$.  Let
  $V$ be the same constant that appears in Theorem~\ref{tube-decomp-thm},
  and define $\delta = \tilde{\delta} / V$.   

  Now if $\supp \hat{u}_i \subset B_{\delta} (\xi_i)$, then according to
  Corollary~\ref{scaled-decomp-cor}, it is possible to find paths 
  $\{\gamma_{i,j}\}_{j=1}^\infty$ such that 
  $|\nabla \gamma_{i,j}-\xi_i|<\tilde{\delta}$ and weights $\{w_{i,j}\}$
  such that 
  \[
    |u_i(t,x)|^2 \leq \sum_j w_{i,j} T_{\gamma_{i,j}, r\delta^{-1}}(t,x)
  \]
  and 
  \[
    \sum_j (r\delta^{-1})^d w_{i,j} \leq C \|u_i\|_{L^2_x}^2.
  \]
  From now on we will simply write $T_{i,j}$ for the tube 
  $T_{\gamma_i,r\delta^{-1}}$ centered at 
  $\gamma_{i,j}$ with width $r\delta^{-1}$.  Thinking of $\gamma_{i,j}$ as
  a curve in $\Real^{d+1}$, where the first coordinate is time and the remaining
  $d$ coordinates are spatial, the condition 
  $|\nabla \gamma_{i,j}-\xi_i|<\tilde{\delta}$ ensures that the curves 
  $\{\gamma_{i,j}\}$ are $\tilde{\delta}$-close to $v_i$.  Since 
  the radius of the tubes $T_{i,j}$ is $r\delta^{-1}$, which is much larger
  than $1$, we may by splitting the tubes into thinner pieces think of them
  as having radius $1$ (buying simplicity at the cost of an ignorable 
  factor of $\delta^{\eps}$).  Thus we may apply Theorem~\ref{mult-kakeya-thm}
  to obtain
  \begin{align*}
    \int_{|x|<R/2}\int_{|t|<R/2}
    \prod_{i=0}^d |u_i(t,x)|^{2/d}
    &\leq \int_{B_R} 
    \prod_{i=0}^d \left(\sum_{j=1}^\infty w_{i,j} T_{i,j}\right)^{1/d}
    \\&\leq C_\eps R^\eps \prod_{i=0}^d 
    \left(\sum_{j=1}^{\infty} w_{i,j}\right)^{1/d}
    \\&\leq C_\eps R^\eps \prod_{i=0}^d \|u_i\|_{L^2_x}^{2/d}.
  \end{align*}
\end{proof}

The multilinear restriction theorem, Theorem~\ref{multi-rest-thm}, follows
upon decomposing each of the $d+1$ solutions into finitely many pieces with
frequency localizations given by $\delta$, and then summing the finitely
many contributions.

\bibliographystyle{amsalpha}
\bibliography{./refs}
\end{document}